\documentclass[11pt,english]{amsart}
\usepackage[T1]{fontenc}
\usepackage[latin1]{inputenc}
\usepackage{amstext}
\usepackage{amsthm}
\usepackage{amssymb}

\makeatletter

\providecommand{\tabularnewline}{\\}

\numberwithin{equation}{section}
\numberwithin{figure}{section}
\theoremstyle{plain}
\newtheorem{thm}{\protect\theoremname}
\theoremstyle{remark}
\newtheorem{rem}[thm]{\protect\remarkname}
\theoremstyle{plain}
\newtheorem{cor}[thm]{\protect\corollaryname}
\theoremstyle{plain}
\newtheorem{prop}[thm]{\protect\propositionname}
\theoremstyle{plain}
\newtheorem{lem}[thm]{\protect\lemmaname}
\theoremstyle{definition}
\newtheorem{example}[thm]{\protect\examplename}

\usepackage{tikz}
\usepackage{pgfplots}

\usepackage{babel}
\providecommand{\corollaryname}{Corollary}
\providecommand{\examplename}{Example}
\providecommand{\lemmaname}{Lemma}
\providecommand{\propositionname}{Proposition}
\providecommand{\remarkname}{Remark}
\providecommand{\theoremname}{Theorem}

\usepackage{babel}
\providecommand{\corollaryname}{Corollary}
\providecommand{\examplename}{Example}
\providecommand{\lemmaname}{Lemma}
\providecommand{\propositionname}{Proposition}
\providecommand{\remarkname}{Remark}
\providecommand{\theoremname}{Theorem}

\makeatother

\usepackage{babel}
\providecommand{\corollaryname}{Corollary}
\providecommand{\examplename}{Example}
\providecommand{\lemmaname}{Lemma}
\providecommand{\propositionname}{Proposition}
\providecommand{\remarkname}{Remark}
\providecommand{\theoremname}{Theorem}

\begin{document}
\addtolength{\textwidth}{0mm} \addtolength{\hoffset}{-0mm} \addtolength{\textheight}{0mm}
\addtolength{\voffset}{-0mm}


\global\long\def\CC{\mathbb{C}}%
\global\long\def\BB{\mathbb{B}}%
\global\long\def\PP{\mathbb{P}}%
\global\long\def\QQ{\mathbb{Q}}%
\global\long\def\RR{\mathbb{R}}%
\global\long\def\FF{\mathbb{F}}%
\global\long\def\DD{\mathbb{D}}%
\global\long\def\NN{\mathbb{N}}%
\global\long\def\ZZ{\mathbb{Z}}%
\global\long\def\HH{\mathbb{H}}%
\global\long\def\Gal{{\rm Gal}}%
\global\long\def\OO{\mathcal{O}}%
\global\long\def\pP{\mathfrak{p}}%
\global\long\def\bA{\mathbf{A}}%
\global\long\def\pPP{\mathfrak{P}}%
\global\long\def\qQ{\mathfrak{q}}%
\global\long\def\cC{\mathfrak{\mathcal{C}}}%
\global\long\def\mm{\mathcal{M}}%
\global\long\def\calK{\mathcal{K}}%
\global\long\def\aaa{\mathfrak{a}}%
\global\long\def\a{\alpha}%
\global\long\def\b{\beta}%
\global\long\def\d{\delta}%
\global\long\def\D{\Delta}%
\global\long\def\L{\Lambda}%
\global\long\def\g{\gamma}%
\global\long\def\G{\Gamma}%
\global\long\def\d{\delta}%
\global\long\def\D{\Delta}%
\global\long\def\e{\varepsilon}%
\global\long\def\k{\kappa}%
\global\long\def\l{\lambda}%
\global\long\def\m{\mu}%
\global\long\def\o{\omega}%
\global\long\def\p{\pi}%
\global\long\def\P{\Pi}%
\global\long\def\s{\sigma}%
\global\long\def\S{\Sigma}%
\global\long\def\t{\theta}%
\global\long\def\T{\Theta}%
\global\long\def\f{\varphi}%
\global\long\def\deg{{\rm deg}}%
\global\long\def\det{{\rm det}}%
\global\long\def\Dem{Proof: }%
\global\long\def\ker{{\rm Ker\,}}%
\global\long\def\im{{\rm Im\,}}%
\global\long\def\rk{{\rm rk\,}}%
\global\long\def\car{{\rm car}}%
\global\long\def\fix{{\rm Fix( }}%
\global\long\def\card{{\rm Card\  }}%
\global\long\def\codim{{\rm codim\,}}%
\global\long\def\coker{{\rm Coker\,}}%
\global\long\def\mod{{\rm mod }}%
\global\long\def\pgcd{{\rm pgcd}}%
\global\long\def\ppcm{{\rm ppcm}}%
\global\long\def\la{\langle}%
\global\long\def\ra{\rangle}%
\global\long\def\Alb{{\rm Alb(}}%
\global\long\def\Jac{{\rm Jac(}}%
\global\long\def\Disc{{\rm Disc(}}%
\global\long\def\Tr{{\rm Tr(}}%
\global\long\def\NS{{\rm NS(}}%
\global\long\def\Pic{{\rm Pic(}}%
\global\long\def\Pr{{\rm Pr}}%
\global\long\def\Km{{\rm Km}}%
\global\long\def\rk{{\rm rk(}}%
\global\long\def\Hom{{\rm Hom(}}%
\global\long\def\End{{\rm End}}%
\global\long\def\aut{{\rm Aut}}%
\global\long\def\SSm{{\rm S}}%
\global\long\def\psl{{\rm PSL}}%
\global\long\def\cu{{\rm (-2)}}%
\global\long\def\aut{{\rm Aut}}%
\global\long\def\mod{{\rm \,mod\,}}%
\subjclass[2000]{Primary: 14J28} 
\title[Kummer structures on generalized Kummer surfaces]{Constructions of Kummer structures on generalized Kummer surfaces}
\author{Xavier Roulleau, Alessandra Sarti}
\begin{abstract}
We study generalized Kummer surfaces $\Km_{3}(A)$, by which we mean
the K3 surfaces obtained by desingularization of the quotient of an
abelian surface $A$ by an order $3$ symplectic automorphism group.
Such a surface carries $9$ disjoint configurations of two smooth
rational curves $C,C'$ with $CC'=1$. This $9{\bf A}_{2}$-configuration
plays a role similar to the Nikulin configuration of $16$ disjoint
smooth rational curves on (classical) Kummer surfaces. We study the
(generalized) question of T. Shioda: suppose that $\Km_{3}(A)$ is
isomorphic to $\Km_{3}(B)$, does that imply that $A$ and $B$ are
isomorphic? We answer by the negative in general, by two methods:
by a link between that problem and Fourier--Mukai partners of $A$,
and by construction of $9{\bf A}_{2}$-configurations on $\Km_{3}(A)$
which cannot be exchanged under the automorphism group. 
\end{abstract}

\maketitle

\section{Introduction}

A Kummer surface $\Km(A)$ is the minimal desingularization of the
quotient of an abelian surface $A$ by the standard involution $[-1]$.
It is a K3 surface containing $16$ disjoint $\cu$-curves, which
lie over the $16$ singularities of $A/\langle[-1]\rangle$. Such
set of curves is called a Kummer (or $16{\bf A}_{1}$) configuration.
A well-known result of Nikulin \cite{NikulinK} gives the converse:
if a K3 surface contains a $16{\bf A}_{1}$-configuration, then it
is the Kummer surface of an abelian surface $A$, such that the $16$
$\cu$-curves lie over the singularities of $A/\langle[-1]\rangle$.

In $1977$ Shioda \cite{Shioda} asked the following question: \textit{if
two abelian surfaces $A$ and $B$ satisfy $\Km(A)\simeq\Km(B)$,
is it true that $A\simeq B$ ?}

Gritsenko and Hulek \cite{GriHu} gave a negative answer to that question
in general. In \cite{RS1,RS2}, we studied and constructed examples
of two $16{\bf A}_{1}$-configurations on the same Kummer surface
such that their associated abelian surfaces are not isomorphic.

Kummer surfaces have natural generalizations to quotients of an abelian
surface $A$ by other symplectic groups $G\subseteq\aut(A)$. If $G\cong\ZZ/3\ZZ$,
then the quotient surface $A/G$ for the action of $G$ on $A$ has
$9$ cusp singularities, in bijection with the fixed points of $G$.
Its minimal desingularization, denoted by $\Km_{3}(A)$, is a K3 surface
which contains what we call a \textit{generalized Kummer configuration}
(or $9{\bf A}_{2}$\textit{-configuration}), which means that the
surface contains $9$ disjoint $\mathbf{A}_{2}$-configurations, i.e.\/
pairs $(C,C')$ of $\cu$-curves such that $CC'=1$. Barth \cite{Barth}
proved that if a K3 surface contains a $9{\bf A}_{2}$-configuration,
then there exists an abelian surface $A$ and a symplectic order $3$
automorphism group such that $X=\Km_{3}(A)$. It is then natural to
ask the \textit{generalized Shioda's question}: does an isomorphism
$\Km_{3}(A)\simeq\Km_{3}(B)$ between two generalized Kummer surfaces
implies that the abelian surfaces $A$ and $B$ are isomorphic?

A generalized Kummer structure on a K3 surface $X$ is an isomorphism
class of pairs $(A,G)$ of abelian surfaces equipped with an order
$3$ symplectic automorphism subgroup $G\subset\aut(A)$, such that
$X\simeq\Km_{3}(A)$, where $\Km_{3}(A)$ is the minimal desingularization
of $A/G$. Thus Shioda's question is if there is only one generalized
Kummer structure on $X$. In \cite{Roulleau2}, we study the number
of such Kummer structures. In \cite{KRS}, we proved that there is
a one-to-one correspondence between Kummer structures on $X$ and
$\aut(X)$-orbits of $9{\bf A}_{2}$-configurations. In the present
paper, we obtain the first explicit examples of generalized Kummer
surfaces which possess two distinct generalized Kummer structures.
For that aim, we construct two $9\mathbf{A}_{2}$-configurations $\mathcal{C},\,\mathcal{C}'$
on the Kummer surface, and prove that there is no automorphism sending
one configuration to the other. A generalized Kummer surface $X=\Km_{3}(A)$
has a natural $9\mathbf{A}_{2}$-configuration 
\[
\mathcal{C}=\{A_{1},B_{1},\dots,A_{9},B_{9}\}.
\]
We suppose that $X$ is generic projective, so that its Picard number
is $19$. That hypothesis is assumed in all the paper. Let $L$ be
the big and nef generator of the orthogonal complement of the curves
in the Néron-Severi group. By a result of Barth \cite{Barth}, one
has either $L^{2}=6k+2$ or $L^{2}=6k$, for $k$ an integer. We suppose
that $6L^{2}$ is not a square, so that the two Pell-Fermat equations
\[
x^{2}-12(3k+1)y^{2}=1\,\text{ and }\,\,x^{2}-4ky^{2}=1
\]
have non-trivial solutions. Let us denote by $(x_{0},y_{0})$ the
fundamental solution according to these cases and let us define accordingly:
\[
\begin{array}{l}
B_{1}'=3y_{0}L-(\tfrac{1}{2}(x_{0}+1)A_{1}+x_{0}B_{1})\text{ if }L^{2}=6k+2,\\
B_{1}'=y_{0}L-(\tfrac{1}{2}(x_{0}+1)A_{1}+x_{0}B_{1})\,\,\text{ if }\,L^{2}=6k.
\end{array}
\]
The class $B_{1}'$ is a $\cu$-class in the Néron-Severi group of
$X$ (i.e. $B_{1}'^{2}=-2$) such that $B_{1}'A_{1}=B_{1}A_{1}=1$.
Our main result is 
\begin{thm}
\label{thm:Main}Suppose that $L^{2}=2t$ is such that either $L^{2}=2\mod6$
or $L^{2}\neq0\mod18$ or $3|y_{0}$. Then $B_{1}'$ is the class
of a $\cu$-curve such that $B_{1}'A_{1}=1$ and the $18$ $\cu$-curves
\[
\mathcal{C}'=\{A_{1},B_{1}',A_{2},B_{2},\dots,A_{9},B_{9}\}
\]
form a $9\mathbf{A}_{2}$-configuration. \\
 Suppose moreover that $L^{2}=2\mod6$ and $x_{0}\neq\pm1\mod2t$,
or $L^{2}=6\text{ or }12\mod$ $18$ and $x_{0}\neq\pm1\mod2k$. There
are no automorphisms sending $\mathcal{C}$ to $\mathcal{C}'$. For
these cases, there are (at least) two generalized Kummer structures
on the generalized Kummer surface $X$. 
\end{thm}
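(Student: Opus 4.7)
The plan is to establish the two assertions of the theorem in sequence.

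I would begin with a direct intersection-number verification that $B_{1}'$ is a $(-2)$-class with $B_{1}'\cdot A_{1}=1$. Expanding in the case $L^{2}=6k+2$ gives
\[
B_{1}'^{2}=9y_{0}^{2}(6k+2)-\tfrac{3x_{0}^{2}+1}{2},
\]
which equals $-2$ exactly because $(x_{0},y_{0})$ satisfies $x^{2}-12(3k+1)y^{2}=1$; the case $L^{2}=6k$ is analogous. Then $B_{1}'\cdot A_{1}=(x_{0}+1)-x_{0}=1$ is immediate, and orthogonality of $B_{1}'$ to all $A_{j},B_{j}$ with $j\geq 2$ follows from the orthogonality of $L,A_{1},B_{1}$ to the remaining $\mathbf{A}_{2}$-blocks.

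The second, and more delicate, task in the first part is to show that $B_{1}'$ is in fact the class of an irreducible smooth rational curve. Riemann--Roch on the K3 surface $X$ guarantees that either $B_{1}'$ or $-B_{1}'$ is effective, and since $L$ is big and nef with $L\cdot B_{1}'=nL^{2}>0$ (where $n=3y_{0}$ or $y_{0}$), the class $B_{1}'$ itself is effective. To rule out reducibility I would assume a splitting $B_{1}'=C_{0}+R$ with $C_{0}$ an irreducible component and $R$ a non-zero effective divisor, examine the possible values of $C_{0}\cdot L$ and of the intersections with the $A_{i},B_{i}$, and use that the discriminant group $\mathrm{NS}(X)/(\ZZ L\oplus L_{\mathcal{C}})$ is a subquotient of $(\ZZ/3\ZZ)^{9}$ (the discriminant of $9\mathbf{A}_{2}$) to classify the possible $(-2)$-summands. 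The three alternative hypotheses $L^{2}\equiv 2\pmod 6$, $L^{2}\not\equiv 0\pmod{18}$, or $3\mid y_{0}$ are tailored so that the coefficient of $L$ in $B_{1}'$ becomes incompatible modulo $3$ with any non-trivial splitting, forcing $R=0$. Once irreducibility is established, the intersection verifications yield at once that $\mathcal{C}'$ is a $9\mathbf{A}_{2}$-configuration.

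For the non-existence of an automorphism I argue by contradiction. If $\f\in\aut(X)$ satisfies $\f(\mathcal{C})=\mathcal{C}'$, then the pull-back $\f^{*}$ is a Hodge isometry of $\mathrm{NS}(X)$ mapping $\la\mathcal{C}'\ra$ onto $\la\mathcal{C}\ra$ and hence the primitive rank-one complement $\la\mathcal{C}'\ra^{\perp}\cap\mathrm{NS}(X)$ onto $\ZZ L$. I would compute a primitive generator $L'$ of this complement by solving $L'\cdot A_{1}=L'\cdot B_{1}'=0$ inside $\la L,A_{1},B_{1}\ra$; a short calculation using the Pell equation gives a formula whose coefficients involve $x_{0}$ and $y_{0}L^{2}$, and in particular $L'^{2}=L^{2}$. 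The relation $\f^{*}L'=\pm L$, combined with the permutation action of $\f^{*}$ on the nine $\mathbf{A}_{2}$-blocks and the requirement that $\f^{*}$ act as $\pm\mathrm{id}$ on the discriminant form of $\mathrm{NS}(X)$ so as to extend to a Hodge isometry of $H^{2}(X,\ZZ)$, translates after bookkeeping into a congruence of the form $x_{0}\equiv\pm 1\pmod{L^{2}}$ in the case $L^{2}=6k+2$, respectively $x_{0}\equiv\pm 1\pmod{2k}$ in the case $L^{2}=6k$. These are precisely the congruences excluded by the hypothesis, which yields the desired contradiction and shows that $\mathcal{C}$ and $\mathcal{C}'$ define two distinct generalized Kummer structures on $X$.

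The main obstacles will be the irreducibility step in the first part, where the arithmetic hypotheses must genuinely be invoked to exclude ``accidental'' $(-2)$-summands supported on the configuration, and in the second part the translation of $\f^{*}L'=\pm L$ into the explicit congruence on $x_{0}$, which requires careful tracking of the overlattice structure of $\ZZ L\oplus L_{\mathcal{C}}$ inside $\mathrm{NS}(X)$ and of the induced action of $\f^{*}$ on its discriminant form.
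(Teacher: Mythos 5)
Your computation that $B_{1}'^{2}=-2$ and $B_{1}'A_{1}=1$, and your argument for the second half of the theorem, essentially coincide with the paper's: an automorphism carrying $\mathcal{C}$ to $\mathcal{C}'$ induces an isometry sending $L$ to the generator $L'=x_{0}L-2ty_{0}(A_{1}+2B_{1})$ (resp.\ $x_{0}L-2ky_{0}(A_{1}+2B_{1})$) of $\mathcal{C}'^{\perp}$, hence acts on the class of $\tfrac{1}{2t}L$ (resp.\ $\tfrac{1}{2k}L$) in the discriminant group by multiplication by $x_{0}$, contradicting the fact that automorphisms act as $\pm\mathrm{id}$ on the discriminant form when $x_{0}\neq\pm1\mod2t$ (resp.\ $\mod2k$). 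That part is fine.

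The genuine gap is the irreducibility of $B_{1}'$. Your proposed mechanism --- assume $B_{1}'=C_{0}+R$ and obtain a contradiction from a congruence modulo $3$ on the coefficient of $L$ --- cannot work: the components of a putative decomposition are not confined to $\langle L,A_{1},B_{1}\rangle$, their $L$-coefficients are a priori unbounded, and a mod-$3$ condition on $3y_{0}$ (resp.\ $y_{0}$) sees nothing. The decisive input, which your sketch never uses, is that $(x_{0},y_{0})$ is the \emph{fundamental} solution of the Pell--Fermat equation: every other solution $(x_{m},y_{m})$, $m\geq2$, produces a class $\tilde{B}_{1}=3y_{m}L-(\tfrac{1}{2}(x_{m}+1)A_{1}+x_{m}B_{1})$ that passes all of your numerical tests ($\tilde{B}_{1}^{2}=-2$, $\tilde{B}_{1}A_{1}=1$, effective) yet is \emph{reducible} (the paper shows $\tilde{B}_{1}B_{1}'<0$, so $B_{1}'$ is a component of $\tilde{B}_{1}$); hence no argument that does not invoke minimality can distinguish $B_{1}'$ from these. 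The paper's route is different and you would need to import it (or an equivalent): one first proves that $L'$ is nef, by showing that any irreducible $(-2)$-curve $\Gamma$ with $L'\Gamma\leq0$ leads, after the $3$-divisibility analysis of Corollary \ref{cor:Structure-of-NS}, to a Pell--Fermat solution with $L$-coefficient at most $3y_{0}$ (resp.\ $y_{0}$), whence $\Gamma\in\{A_{1},B_{1}',A_{2},\dots,B_{9}\}$ by minimality of the fundamental solution; the hypotheses $L^{2}\neq0\mod18$ or $3|y_{0}$ enter precisely here, to exclude certain classes with coefficients in $\tfrac{1}{3}\ZZ\setminus\ZZ$ orthogonal to $L'$ whose presence in the N\'eron--Severi group would force $B_{1}'$ to be reducible. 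Irreducibility of $B_{1}'$ is then deduced from the contraction morphism attached to a multiple of the big and nef class $L'$, which by the Picard-number-$19$ hypothesis must send $A_{1}+B_{1}'$ to an $\mathbf{A}_{2}$-singularity.
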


As pointed out in Example \ref{eq:ExamplesLess200} the first values
of $L^{2}$ for which our theorem produces new generalized Kummer
structures are: 
\[
20,44,68,84,92,104,110,116,120,126,132,140,164,168,176,188.
\]
Theorem \ref{smallL} shows that the hypothesis $x_{0}\neq\pm1\mod2k$
is sharp, since if $x_{0}=\pm1\mod2k$ and $L^{2}\leq200$, there
exists an automorphisms sending $\mathcal{C}$ to $\mathcal{C}'$.
We expect that one can remove the somehow technical hypothesis $L^{2}\neq0\mod18$
or $3|y_{0}$ of Theorem \ref{thm:Main}.

We observe that our construction can be applied to other configurations
$A_{j},B_{j}$ than $A_{1},B_{1}$. For $L^{2}\leq200$, one can check
that repeating twice the construction, we go back (up to automorphisms)
to the original Kummer configuration. 

The paper is structured as follows: in Section 2, we give a precise
description of the Néron-Severi group of a generalized Kummer surface
and how a divisor can be written in the $\QQ$-basis $L,A_{1},B_{1},\dots,A_{9},B_{9}.$
In the case that $L^{2}=2\mod6$, we also obtain that $\Km_{3}(A)\simeq\Km_{3}(B)$
if and only if $A$ and $B$ are Fourier--Mukai partners. In Section
3, according to the cases $L^{2}=0\text{ or }2\mod6$, we construct
two generalized Kummer configurations. In Section $4$, we give a
sufficient condition on which these Kummer configurations give rise
to two generalized Kummer structures. In the last section we describe
the projective model of the K3 surface determined by $L$ and we recall
some known constructions in the literature. We describe more in details
the case $L^{2}=20$, which is the first case for which our Theorem
gives two non--equivalent generalized Kummer structures. In particular
we obtain a model of $X$ as a double cover of the plane and show
that the automorphism group preserving the double cover is isomorphic
to $\ZZ_{2}\times(\ZZ_{3}\rtimes S_{3})$.

We aim to study more projective models in a forthcoming paper.

\subsection*{Acknowledgements}

The authors are grateful to the referee for his numerous comments
helping to clarify this paper. The second author is partially supported
by the ANR project No. ANR-20-CE40-0026-01 (SMAGP).

\section{The Néron--Severi lattice and its properties, Fourier--Mukai partners}

\subsection{Construction of the Néron-Severi lattice of $X$\label{subsec:Construction-of-theNS}}

Let $A$ be an abelian surface with an action of a group $G:=\ZZ/3\ZZ$
that leaves invariant each element of the space $H^{0}(A,\Omega_{A}^{2})$
(we call this action \textit{symplectic}). It is well known that the
quotient $A/G$ has $9{\bf A}_{2}$ singularities. The minimal resolution
denoted by $X:=\Km(A,G)$ is a K3 surface, called a \textit{generalized
Kummer surface}, which carries a configuration of rational curves
with Dynkin diagram $9{\bf A}_{2}$. Observe that the abelian surface
$A$ has Picard number at least $3$, see \cite[Proposition on p. 10]{Barth}
and the K3 surface $X$ has generically Picard number $19$. Let $\calK_{3}$
denotes the minimal primitive sub-lattice of the K3 lattice, $\Lambda_{K3}$,
that contains the $9$ configurations ${\bf A}_{2}$. This is a rank
$18$ negative definite even lattice of discriminant $3^{3}$, which
is described as follows. Denote by $A_{j},B_{j}$, $j=1,\ldots,9$
the nine couples of $(-2)$-curves generating the nine ${\bf A}_{2}$.
Then by \cite[Proof of Proposition 1.3]{Bertin} the lattice $\calK_{3}$
is generated by the classes $A_{1},B_{1},\ldots,A_{9},B_{9}$ and
the three classes 
\[
\begin{array}{l}
t_{1}=\frac{1}{3}(\sum_{i=1}^{9}(A_{i}-B_{i})),\\
t_{2}=\frac{1}{3}((A_{2}-B_{2})+2(A_{3}-B_{3})+A_{6}-B_{6}+2(A_{7}-B_{7})+A_{8}-B_{8}+2(A_{9}-B_{9})),\\
t_{3}=\frac{1}{3}((A_{4}-B_{4})+2(A_{5}-B_{5})+A_{6}-B_{6}+2(A_{7}-B_{7})+2(A_{8}-B_{8})+A_{9}-B_{9}),
\end{array}
\]
with intersection matrix: 
\[
\left(\begin{array}{ccc}
-6 & -6 & -6\\
-6 & -10 & -6\\
-6 & -6 & -10
\end{array}\right).
\]
The discriminant group $\calK_{3}^{\vee}/\calK_{3}$ is generated
by the classes 
\[
\begin{array}{l}
w_{1}=\frac{1}{3}(A_{5}-B_{5}+A_{7}-B_{7}+A_{8}-B_{8}),\\
w_{2}=\frac{1}{3}(2(A_{4}-B_{4})+A_{6}-B_{6}+2(A_{7}-B_{7})+A_{8}-B_{8}),\\
w_{3}=\frac{1}{3}(A_{3}-B_{3}+A_{5}-B_{5}+A_{6}-B_{6}),
\end{array}
\]
with intersection matrix: 
\[
\left(\begin{array}{ccc}
-2 & -2 & -\frac{2}{3}\\
-2 & -\frac{20}{3} & -\frac{2}{3}\\
-\frac{2}{3} & -\frac{2}{3} & -2
\end{array}\right).
\]

\begin{thm}
\label{thm:StrucureOfNeronSeveri}Assume $\rho(\Km(A,G))=19$ and
let $L$ be a generator of $\calK_{3}^{\perp}\subset\NS\Km(A,G))$
such that $L$ is big and nef. Then $L^{2}\equiv0\,\mod\,6$ or $L^{2}\equiv2\,\mod\,6$.
For an integer $k$, let us denote by $\calK_{6k}$ (respectively
$\calK_{6k+2}$) the lattice $\ZZ L\oplus\calK_{3}$ when $L^{2}=6k$
(respectively $L^{2}=6k+2$). Then 
\begin{enumerate}
\item If $L^{2}\equiv0\,\mod\,6$ then 
\[
\NS\Km(A,G))=\calK_{6k}'
\]
where $\calK_{6k}'$ is generated by $\calK_{6k}$ and by a class
$(L+v_{6k})/3$ where $v_{6k}/3\in\calK_{3}^{\vee}/\calK_{3}$ with
$L^{2}=-v_{6k}^{2}\,\mod\,18$, moreover $\calK_{6k}$ is the unique
even lattice, up to isometry, such that $[\calK_{6k}':\calK_{6k}]=3$
and $\calK_{3}$ is a primitive sublattice of $\calK_{6k}'$, so that
we can assume that 
\begin{enumerate}
\item If $L^{2}\equiv0\,\mod18$ then $v_{6k}^{2}\equiv0\,\mod18$. 
\item If $L^{2}\equiv12\,\mod18$ then $v_{6k}^{2}\equiv-12\,\mod18$. 
\item If $L^{2}\equiv6\,\mod18$ then $v_{6k}^{2}\equiv-6\,\mod18$. 
\end{enumerate}
\item If $L^{2}\equiv2\,\mod\,6$ then 
\[
\NS\Km(A,G))=\calK_{6k+2}.
\]
\end{enumerate}
\end{thm}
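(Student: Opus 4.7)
The plan is to realize $\NS X)$ as a finite-index overlattice of $M:=\ZZ L\oplus\calK_{3}$ and to determine that index using primitivity of the two summands together with the arithmetic of the discriminant form. We have
\[
M^{\vee}/M\,\cong\,\ZZ/L^{2}\ZZ\,\oplus\,\calK_{3}^{\vee}/\calK_{3},
\]
with $\calK_{3}^{\vee}/\calK_{3}\cong(\ZZ/3\ZZ)^{3}$, and $\NS X)/M$ sits inside $M^{\vee}/M$ as an isotropic subgroup. The lattice $\calK_{3}$ is primitive in $\NS X)$ by its very definition as the minimal primitive sublattice containing the nine $\mathbf{A}_{2}$-configurations, and $\ZZ L=\calK_{3}^{\perp}\cap\NS X)$ is primitive in $\NS X)$ as well. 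Both projections
\[
\pi_{1}:\NS X)/M\longrightarrow\ZZ/L^{2}\ZZ\quad\text{and}\quad\pi_{2}:\NS X)/M\longrightarrow\calK_{3}^{\vee}/\calK_{3}
\]
are therefore injective: if $\pi_{2}(x)=0$, writing $x=x_{1}+x_{2}$ with $x_{1}\in\QQ L$ and $x_{2}\in\calK_{3}\subset\NS X)$ gives $x-x_{2}\in\QQ L\cap\NS X)=\ZZ L$, so $x\in M$, and the argument for $\pi_{1}$ is symmetric. Hence $\NS X)/M$ is simultaneously cyclic (it embeds in $\ZZ/L^{2}\ZZ$) and $3$-torsion (it embeds in $(\ZZ/3\ZZ)^{3}$), so its order is either $1$ or $3$.

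The case analysis is then immediate. If $L^{2}\equiv 2\,\mod\,6$ then $\ZZ/L^{2}\ZZ$ has no $3$-torsion, and injectivity of $\pi_{1}$ forces $\NS X)/M=0$, yielding part (2). If $L^{2}\equiv 0\,\mod\,6$ the $3$-torsion of $\ZZ/L^{2}\ZZ$ is generated by the class of $L/3$, so any nonzero generator of $\NS X)/M$ takes the form $(L+v)/3$ with $v\in\calK_{3}^{\vee}$, $v/3\notin\calK_{3}$. Using $L\perp\calK_{3}^{\vee}$, the evenness condition on its self-intersection becomes $L^{2}+v^{2}\equiv 0\pmod{18}$, which is precisely the congruence defining $v_{6k}$. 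A direct computation with the discriminant form on the generators $w_{1},w_{2},w_{3}$ shows that the values of $v^{2}\bmod 18$ attained by nonzero $v/3\in\calK_{3}^{\vee}/\calK_{3}$ are exactly $0,\,6$ and $12$, which pairs the residues $L^{2}\equiv 0,\,12,\,6\pmod{18}$ with the sub-cases (a), (b), (c) respectively.

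The main obstacle is to upgrade the upper bound just obtained to the statement that, for $L^{2}\equiv 0\,\mod\,6$, the index of $M$ in $\NS X)$ is \emph{exactly} $3$: the primitivity argument alone leaves open the possibility $\NS X)=M$. To close the gap I would exhibit the required class $(L+v)/3$ geometrically, by pulling back a suitable $G$-invariant divisor of $A$ through the rational correspondence $A\dashrightarrow X$ induced by $A\to A/G$ and the minimal resolution, then correcting by the exceptional curves $A_{i},B_{i}$ (in the spirit of the computations of \cite{Barth,Bertin}); this produces an explicit element of $\NS X)$ with denominator $3$ in the basis $L,A_{1},B_{1},\dots,A_{9},B_{9}$. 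Uniqueness of $\calK_{6k}'$ up to isometry finally reduces to transitivity of $O(\calK_{3})$ on the classes of fixed norm inside $\calK_{3}^{\vee}/\calK_{3}$, which is a finite check from the intersection matrix of $w_{1},w_{2},w_{3}$.
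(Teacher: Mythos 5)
Your reduction of the problem to computing the index $[\NS X):M]$ for $M=\ZZ L\oplus\calK_{3}$, the injectivity of the two projections via primitivity, and the conclusion that the index is $1$ or $3$ are all correct, and your treatment of the case $L^{2}\equiv2\bmod6$ and of the congruence $L^{2}+v_{6k}^{2}\equiv0\bmod18$ matches the paper's. The uniqueness-up-to-isometry step, reduced to transitivity of the image of $O(\calK_{3})$ in $O(\calK_{3}^{\vee}/\calK_{3})$ on classes of fixed norm, is also the paper's argument (the image is $\ZZ/2\ZZ\times S_{4}$, with exactly four orbits distinguished by the value of the discriminant form).

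The genuine gap is the one you flag yourself: showing that for $L^{2}\equiv0\bmod6$ the index is \emph{exactly} $3$, i.e.\ that $\NS X)\neq M$. Your proposed fix --- exhibiting the class $(L+v_{6k})/3$ by pushing forward a $G$-invariant divisor from $A$ and correcting by exceptional curves --- is only a plan, not an argument, and it is not the route the paper takes; carrying it out would essentially amount to redoing Barth's lattice computation. The paper closes the gap with a short abstract argument that you should adopt: since $\rho(X)=19$, the transcendental lattice $T(X)$ has rank $3$, and for a K3 surface the discriminant group of $\NS X)$ is isomorphic to that of $T(X)$ (with the form negated), hence has length at most $3$. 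But when $3\mid L^{2}$ the discriminant group of $M$ is $\ZZ/L^{2}\ZZ\oplus(\ZZ/3\ZZ)^{3}$, whose $3$-part has length $4$. Therefore $M$ cannot equal $\NS X)$, and combined with your upper bound the index is exactly $3$. (One should also note, as the paper's Remark does via $v_{6k}=3w_{1}$, $3w_{2}$ or $3(w_{3}-w_{2})$, that each of the three residues of $v^{2}\bmod18$ is actually attained by a nonzero class, so each sub-case (a)--(c) is consistent.) With that substitution your proof is complete.
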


\begin{proof}
The fact that $L^{2}\equiv0\mod6$ or $L^{2}\equiv2\mod6$ follows
from \cite[Section 2.2]{Barth}. In the first case if $L^{2}\equiv0\mod6$
then the discriminant group of the lattice $\calK_{6k}$ contains
the generators $w_{1},w_{2},w_{3}$ and $L^{2}/6k$. Recall that for
a K3 surface the discriminant group of the Néron-Severi group is the
same as the discriminant group of the transcendental lattice, with
the quadratic form which changes the sign. Since here the rank of
the transcendental lattice is three, the number of independent generators
of the discriminant group does not exceed three, hence a class of
the discriminant group of the lattice $\calK_{6k}$ is contained in
the Néron-Severi group. This class is of the form $\frac{L+v_{6k}}{n}$,
where $\frac{v_{6k}}{n}$ belongs to the discriminant group of $\calK_{3}$.
By the previous description we necessarily have that $n=3$. Moreover
since the class $\frac{L+v_{6k}}{3}$ is now in the Néron-Severi group,
then $(\frac{L+v_{6k}}{3})^{2}\in2\ZZ$ so that $L^{2}+v_{6k}^{2}\in18\ZZ$
since $L^{2}=6k$ for some integer $k$, thus we get the cases for
$L$ and $v_{6k}$ listed in the statement. Moreover if $\frac{L+v'_{6k}}{3}$
is another class contained in the Néron-Severi group as before then
$\frac{v_{6k}-v'_{6k}}{3}$ belongs to the Néron-Severi group and
so to $\calK_{3}$ but $\frac{v_{6k}-v'_{6k}}{3}\in\calK_{3}^{\vee}/\calK_{3}$
so it must be a zero class.

For the unicity statement, we use a similar argument as in \cite[Proposition 2.2]{GarbSar},
which is as follows: First, one computes some generators of the isometry
group $O(\mathcal{K}_{3})$ of the negative definite lattice $\mathcal{K}_{3}$.
These elements act on the discriminant group $\calK_{3}^{\vee}/\calK_{3}\simeq(\ZZ/3\ZZ)^{3}$
and one obtains that the image of $O(\mathcal{K}_{3})$ in $O(\calK_{3}^{\vee}/\calK_{3})$
is a group isomorphic to $\ZZ/2\ZZ\times S_{4}$, which has exactly
$4$ orbits: $O_{z}=\{0\},O_{0},O_{1},O_{2}$, where the elements
in $O_{i}$ ($i\in\{0,1,2\}$) have square $\tfrac{2i}{3}\mod2$.
Therefore the isometry class of the gluing $\calK_{6k}'$ does not
depend on the choice of the element $v_{6k}$ and is unique for each
of the respective cases (a), (b) or (c).

In case that $L^{2}=6k+2$ with $k$ an integer, observe that a class
of the form $\frac{L+v_{6k+2}}{3}$ must satisfy $(L\cdot\frac{L+v_{6k+2}}{3})=\frac{6k+2}{3}\in\ZZ$
which is impossible, so this class does not exist and we get that
\[
\NS\Km(A,G))=\calK_{6k+2},
\]
which finishes the proof. 
\end{proof}
\begin{rem}
\label{rem:The-classes-v6t}The classes $v_{6k}$ can be chosen equal
to be $3w_{1}$ or $3w_{3}$ if $L^{2}\equiv0\mod18$; equal to $3w_{2}$
if $L^{2}\equiv6\mod18$ and equal to $3(w_{3}-w_{2})$ if $L^{2}\equiv12\mod18$.

The following result is due to Barth: 
\end{rem}

\begin{thm}
\label{thm:().-Barth-exist}(\cite[Section 2.2]{Barth}). There exists
a K3 surface $X$ such that $\NS X)=\calK_{6k}'$ (respectively $\NS X)=\calK_{6k+2}$)
for an integer $k>0$ (respectively $k\geq0$) and such a surface
$X$ is a generalized Kummer surface. 
\end{thm}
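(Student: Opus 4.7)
The plan is to combine Nikulin's theory of primitive embeddings of lattices with the surjectivity of the period map for K3 surfaces to produce a K3 surface with the prescribed Picard lattice, and then to invoke Barth's characterization (recalled in the introduction) to recognize this surface as a generalized Kummer surface.

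First I would show that the even lattice $M=\calK_{6k}'$ (respectively $M=\calK_{6k+2}$) of signature $(1,18)$ admits a primitive embedding into the K3 lattice $\Lambda_{K3}\cong U^{\oplus3}\oplus E_{8}(-1)^{\oplus2}$. A direct computation of the discriminant form of $M$, based on the description of $\calK_{3}$ and $\calK_{3}^{\vee}/\calK_{3}\cong(\ZZ/3\ZZ)^{3}$ given in Subsection~\ref{subsec:Construction-of-theNS} together with the index-$3$ gluing of Theorem~\ref{thm:StrucureOfNeronSeveri} in the $\calK_{6k}'$ case, shows that the $p$-primary length $\ell_{p}(A_{M})$ is at most $3$ for every prime $p$: the summand $\ZZ L$ contributes only cyclic factors, while the index-$3$ gluing absorbs one of the four would-be $3$-elementary generators in the $\calK_{6k}'$ case. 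Since $\mathrm{rk}(M)+\ell(A_{M})\leq19+3=22=\mathrm{rk}(\Lambda_{K3})$, Nikulin's criterion for primitive embeddings into an even unimodular lattice yields a primitive embedding $M\hookrightarrow\Lambda_{K3}$ whose orthogonal complement $T$ is an even lattice of signature $(2,1)$ with discriminant form $-q_{M}$.

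Next, by the surjectivity of the period map, for a very general period point $\omega\in\PP(T\otimes\CC)$ with $\omega^{2}=0$, $\omega\bar{\omega}>0$ and $\omega\cdot x\neq0$ for every nonzero $x\in M$, there exists a marked K3 surface $(X,\phi)$ with period $\omega$ and with $\NS X)\simeq M$. After composing $\phi$ with a suitable element of the Weyl group of $M$, one moves the ample cone so that the $18$ classes $A_{1},B_{1},\dots,A_{9},B_{9}$ all lie in the closure of the nef cone; each primitive nef $(-2)$-class is then represented by an irreducible smooth rational curve, and the intersection pattern inherited from $\calK_{3}$ forces these $18$ curves to form a $9\mathbf{A}_{2}$-configuration on $X$. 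Barth's theorem then identifies $X$ with $\Km(A,G)$ for some abelian surface $A$ endowed with a symplectic order-$3$ automorphism group $G$, so $X$ is a generalized Kummer surface with $\NS X)$ of the prescribed isometry type.

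The main technical obstacle is the Weyl-chamber step: one must verify that the prescribed $18$ vectors $A_{i},B_{i}$ can simultaneously be rendered nef, i.e.\ that they can be taken as the simple roots of an $A_{2}^{\oplus9}$ root subsystem bounding a chamber of the positive cone containing the ray of the big and nef class $L$. This is ensured by the orthogonality of $L$ to $\calK_{3}$ and by the negative-definiteness of $\calK_{3}$: within the chamber determined by $L$, the only effective $(-2)$-classes supported on $\calK_{3}$ are positive combinations of the $A_{i},B_{i}$, so no further $(-2)$-roots obstruct the geometric realization of the configuration.
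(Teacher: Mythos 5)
The paper offers no independent proof of this statement: it is quoted from Barth, whose argument runs in the opposite direction to yours. Barth starts on the abelian side, computes the rank $4$ invariant lattice ${\bf L}_{A}$ of an order $3$ symplectic action and the corresponding lattice ${\bf L}_{X}\subset H^{2}(X,\ZZ)$ on the quotient, and shows that every admissible polarization class $L$ (those with $L^{2}\equiv0$ or $2$ modulo $6$) is realized by some abelian surface with such an action. Your route --- construct the K3 surface first by lattice theory and surjectivity of the period map, then recognize it as a generalized Kummer surface via Barth's converse criterion for $9\mathbf{A}_{2}$-configurations (a separate result, so there is no circularity) --- is a genuinely different and in principle viable strategy. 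Your Weyl-chamber step is sound: the roots of $\calK_{3}$ are exactly those of the sublattice $\mathbf{A}_{2}(-1)^{\oplus9}$ (every glue vector has square at most $-4$), so after moving to a suitable chamber the only irreducible $\cu$-curves orthogonal to $L$ are the $A_{i},B_{i}$, and they form the required configuration; this is consistent with what the paper itself does around Proposition \ref{ample}.

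However, the primitive-embedding step as written has a gap. You invoke Nikulin's criterion in the form $\mathrm{rk}(M)+\ell(A_{M})\leq\mathrm{rk}(\Lambda_{K3})=22$, where $A_{M}=M^{\vee}/M$, but equality is not sufficient: the existence of a complementary even lattice $T$ of signature $(2,1)$ with discriminant form $-q_{M}$ is guaranteed by Nikulin only when $\mathrm{rk}(T)>\ell_{p}(A_{M})$ for every prime $p$; when $\mathrm{rk}(T)=\ell_{p}(A_{M})$ an additional $p$-adic condition on the discriminant form must be checked. Here you are exactly in that boundary case at $p=3$: for $\calK_{6k+2}$ the $3$-primary part of the discriminant group is $(\ZZ/3\ZZ)^{3}$, so $\ell_{3}(A_{M})=3=\mathrm{rk}(T)$, and the inequality alone proves nothing (there do exist signature $(1,18)$ even lattices with $\ell=3$ admitting no primitive embedding into $\Lambda_{K3}$). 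The needed condition does hold for these particular lattices, but it must be verified --- most economically by exhibiting $T$ directly as the orthogonal complement of a vector of square $6k$ resp.\ $6k+2$ inside Barth's rank $4$ lattice ${\bf L}_{X}$ with Gram matrix given in the paper, or equivalently by first embedding $\calK_{3}$ primitively into $\Lambda_{K3}$ using one known generalized Kummer surface (say $\Km_{3}(E_{\zeta}\times E_{\zeta})$) and then choosing $L$ in the rank $4$ orthogonal complement. Without some such verification the embedding, and hence the whole construction, is not established.
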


\begin{rem}
i) When the Picard number of $X$ is $19$, the discriminant group
of $\NS X)$ determines uniquely $L^{2}$ and $\NS X)$. \\
 ii) Theorems \ref{rem:The-classes-v6t} and \ref{thm:().-Barth-exist}
and their proofs (with $L\neq0$) are also valid when $A$ is a complex
non-algebraic torus, equivalently, when $L^{2}\leq0$. If $L=0$,
then $\NS X)=\calK_{3}$ is negative definite of rank $18$, and the
K3 surface $X$ is also non-algebraic. 
\end{rem}

\subsection{\label{subsec:Classes-and-polarizations}Classes and polarizations
in the Néron-Severi lattice of $X$}

\subsubsection{\label{subsec:Notations-and-divisibility}Notations and divisibility
of the classes}

Let us denote the $\cu$-curves forming the generalized Kummer configuration
by $A_{1},B_{1},\dots,A_{9},B_{9}$, where $A_{j}B_{j}=1$ and denote
by $L$ the orthogonal complement of these $18$ curves in the rank
$19$ lattice $\NS X)$, we recall that $L^{2}=2t$, for $t\in\NN^{*}$.
Let $a,\a_{j},\b_{j}\in\frac{1}{3}\ZZ$ be such that the class 
\[
\G=aL-\sum_{j=1}^{9}\left(\a_{j}A_{j}+\b_{j}B_{j}\right),
\]
is in the Néron-Severi lattice $\NS X)$. 
\begin{cor}
\label{cor:Structure-of-NS}0) For $j\in\{1,\dots,9\}$, one has $\a_{j}\in\tfrac{1}{3}\ZZ\setminus\ZZ\Leftrightarrow\b_{j}\in\tfrac{1}{3}\ZZ\setminus\ZZ$.\\
 1) Suppose $L^{2}=2\mod6$. Then $a\in\ZZ$ and if one coefficient
$\a_{j}$ or $\b_{j}$ is in $\tfrac{1}{3}\ZZ\setminus\ZZ$, then
there are $12$ or $18$ coefficients that are in $\tfrac{1}{3}\ZZ\setminus\ZZ$,\\
2) Suppose $L^{2}=0\mod6$, and $a\in\ZZ$. If one coefficient $\a_{j}$
or $\b_{j}$ is in $\tfrac{1}{3}\ZZ\setminus\ZZ$, then there are
$12$ or $18$ coefficients that are in $\tfrac{1}{3}\ZZ\setminus\ZZ$,\\
3) Suppose $L^{2}=0\mod18$ (respectively $L^{2}=6\mod18$ and $L^{2}=12\mod18$),
and $a\in\tfrac{1}{3}\ZZ\setminus\ZZ$. Then there are at least $6$
(respectively $8$ and $10$) coefficients $\a_{j}$ or $\b_{j}$
that are in $\tfrac{1}{3}\ZZ\setminus\ZZ$. \\
 4) The group $L^{\perp}/\left\langle A_{1},\dots,B_{9}\right\rangle $
(where the orthogonal of $L$ is taken in $\NS X)$) is isomorphic
to $(\ZZ/3\ZZ)^{3}$. The $27$ elements of that group are: $24$
elements which are supported on $6$ ${\bf A}_{2}$ blocs, an element
$S$ supported on the $9$ blocs ${\bf A}_{2}$, the element $2S$,
and the zero element. 
\end{cor}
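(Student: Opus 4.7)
The plan is to write every element $\G \in \NS X)$ as a $\QQ$-combination of $L, A_1, B_1, \dots, A_9, B_9$ and control the fractional parts of the coefficients using the explicit generators of $\NS X)$ given by Theorem~\ref{thm:StrucureOfNeronSeveri}. This reduces the corollary to a finite combinatorial problem in $\FF_3$.

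First I would prove part 4, which is the combinatorial heart of the statement. The quotient $\calK_3 / \langle A_1, B_1, \dots, A_9, B_9 \rangle$ is generated by the images of $t_1, t_2, t_3$, each of order $3$; since their coefficient vectors in $\FF_3^9$ (the $j$-th entry being the coefficient of $(A_j - B_j)/3$) are $\FF_3$-linearly independent, the quotient is isomorphic to $(\ZZ/3\ZZ)^3$ and has $27$ elements. For $\gamma = \lambda_1 t_1 + \lambda_2 t_2 + \lambda_3 t_3$ the nine entries of the coefficient vector are explicit affine forms in $(\lambda_1, \lambda_2, \lambda_3) \in \FF_3^3$: bloc $1$ gives $\lambda_1$, blocs $2$ and $3$ give $\lambda_1 + \lambda_2$ and $\lambda_1 + 2\lambda_2$, blocs $4$ and $5$ give $\lambda_1 + \lambda_3$ and $\lambda_1 + 2\lambda_3$, and similarly for blocs $6,\dots,9$. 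A direct case analysis shows that all nine entries are nonzero precisely when $\lambda_2 = \lambda_3 = 0$ and $\lambda_1 \in \{1,2\}$, giving $\pm t_1 = \pm S$; for every other nonzero triple exactly six entries are nonzero.

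Part 0 follows at once from the observation that every generator contributing a non-integer component to $\a_j, \b_j$ --- namely $t_1, t_2, t_3$ and, in the case $L^2 \equiv 0 \mod 6$, the extra class $(L + v_{6k})/3$ with $v_{6k}/3 \in \{w_1, w_2, w_3, w_3 - w_2\}$ by Remark~\ref{rem:The-classes-v6t} --- is a $\tfrac{1}{3}\ZZ$-combination of differences $A_i - B_i$; hence the fractional parts of $\a_j$ and $\b_j$ are always linked.

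For parts 1 and 2, when $L^2 \equiv 2 \mod 6$ Theorem~\ref{thm:StrucureOfNeronSeveri} gives $\NS X) = \ZZ L \oplus \calK_3$, so $a \in \ZZ$ automatically and $\G - aL \in \calK_3$. When $L^2 \equiv 0 \mod 6$, I would write $\G = m L + \kappa + n(L + v_{6k})/3$ with $m, n \in \ZZ$ and $\kappa \in \calK_3$; then $a = m + n/3$, so the hypothesis $a \in \ZZ$ forces $n \equiv 0 \mod 3$, whence $\G - aL \in \calK_3$. In both situations part 4 applies to the image of $\G - aL$ in $\calK_3 / \langle A_j, B_j\rangle$ and, combined with part 0, yields the two statements.

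For part 3, the hypothesis $a \in \tfrac{1}{3}\ZZ \setminus \ZZ$ forces $n \not\equiv 0 \mod 3$ in the decomposition above, so $\G - aL \equiv \pm v_{6k}/3 \pmod{\calK_3}$. The support of $\G - aL$ modulo $\langle A_j, B_j\rangle$ is therefore obtained by minimizing the Hamming weight of $\pm v_{6k}/3 + \gamma$ in $\FF_3^9$ as $\gamma$ ranges over the $27$-element group of part 4. By Remark~\ref{rem:The-classes-v6t}, $v_{6k}/3$ has Hamming weight $3$ in the subcase $L^2 \equiv 0 \mod 18$, weight $4$ in the subcase $L^2 \equiv 6 \mod 18$, and weight $5$ in the subcase $L^2 \equiv 12 \mod 18$. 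The minima of the corresponding cosets are $3, 4, 5$ respectively, translating to at least $6, 8, 10$ non-integer coefficients $\a_j, \b_j$.

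The main obstacle is this last minimization. For the weight-$3$ subcase a triangle-inequality argument using that the minimum Hamming weight of the code $\calK_3 / \langle A_j, B_j\rangle \subset \FF_3^9$ is $6$ (by part 4) already forces a coset-minimum $\geq 6 - 3 = 3$. The weight-$4$ and weight-$5$ subcases are tighter: a direct coset-leader computation, enumerating the $27$ possible $\gamma$, is needed to exclude representatives of Hamming weight $< 4$ and $< 5$ respectively. This finite but somewhat delicate $\FF_3$-calculation is the substantive content of the corollary beyond the setup provided by Theorem~\ref{thm:StrucureOfNeronSeveri}.
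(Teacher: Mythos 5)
Your proposal is correct and follows essentially the same route as the paper: the paper's proof consists of the single assertion that the corollary is ``a consequence of Theorem \ref{thm:StrucureOfNeronSeveri} and Remark \ref{rem:The-classes-v6t}'', and your bloc-by-bloc analysis of the affine forms $\lambda_{1}+a\lambda_{2}+b\lambda_{3}$ on $\FF_{3}^{2}$ together with the coset-leader minimization for $w_{1},w_{2},w_{3}-w_{2}$ is precisely the computation hiding behind that assertion (the claimed coset minima $3,4,5$ do check out, and you correctly note that the triangle inequality alone only settles the weight-$3$ case). The one point of divergence is that for the ``$12$ or $18$'' count the paper also offers a more geometric alternative --- Barth's lemma that a $3$-divisible set of cusps on a K3 surface has cardinality $6$ or $9$, proved via triple cyclic covers branched over the cusps --- which your explicit description of the $27$-element group in part 4 replaces by a direct verification.
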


\begin{proof}
It is a consequence of Theorem \ref{thm:StrucureOfNeronSeveri} and
Remark \ref{rem:The-classes-v6t}. But we may also use the more conceptual
result of Barth, that a $3$-divisible set of $A_{2}$-configuration
on a K3 surface has support on $6$ or $9$ $A_{2}$-configurations.
For example, in case 2), since $L^{2}=2\mod6$, by Theorem \ref{thm:StrucureOfNeronSeveri},
the Néron-Severi lattice is $\NS X)=\ZZ L\oplus\mathcal{K}_{3}$,
therefore the coefficient of $L$ is an integer. Let $S=\{r_{1},\dots,r_{k}\}$
be a set of $k$ elements in $\{1,\dots,9\}$ and consider the singular
surface $X_{S}$ obtained by contracting curves $A_{j},B_{j}$ to
cusps for $j\in\{r_{1},\dots,r_{k}\}$. By classical results on cyclic
covers (see e.g. \cite[Chapter I, 17]{BHPV}, see also \cite{BarthCodes}
for the particular case of cusps), there exists an order $3$ cyclic
cover branched exactly over the cusps of $X_{S}$ if and only if $\sum_{j\in R}\tfrac{1}{3}(A_{j}-B_{j})\in\NS X)$.
In \cite[Lemma 1]{Barth9cusps}, Barth obtains that when such a cover
exists, one has necessarily $k=6$ or $9$ (this is the analogous
result of a well-known result of Nikulin for the classical Kummer
surfaces). That implies that there are $12$ or $18$ coefficients
of $\G$ that are in $\tfrac{1}{3}\ZZ\setminus\ZZ$.
\end{proof}
Let $\G\in\NS X)$ be the class of a divisor and let us write 
\[
\G=aL-\frac{1}{3}\sum_{j=1}^{9}\left(a_{j}A_{j}+b_{j}B_{j}\right).
\]
with $a\in\frac{1}{3}\ZZ,\,\,a_{j},b_{j}\in\ZZ$. The intersection
numbers $\G A_{j}=\frac{1}{3}(2a_{j}-b_{j})$ and $\G B_{j}=\frac{1}{3}(2b_{j}-a_{j})$
are integers. Since $2a_{j}-b_{j}$ and $2b_{j}-a_{j}$ are divisible
by $3$, there exist integers $u_{j},v_{j}$ such that 
\[
\left\{ \begin{array}{c}
a_{j}=u_{j}+2v_{j}\\
b_{j}=2u_{j}+v_{j}
\end{array}\right.,
\]
so that we can write 
\[
\G=aL-\frac{1}{3}\sum_{j=1}^{9}\left((u_{j}+2v_{j})A_{j}+(2u_{j}+v_{j})B_{j}\right),
\]
with $u_{j},v_{j}\in\ZZ$, which is also 
\[
\G=aL-\frac{1}{3}\sum_{j=1}^{9}\left(u_{j}F_{j}+v_{j}G_{j}\right)
\]
for 
\[
F_{j}=A_{j}+2B_{j},\,G_{j}=2A_{j}+B_{j}.
\]
We have $F_{j}^{2}=G_{j}^{2}=-6,$ $F_{j}G_{j}=-3,$ so that 
\[
\G^{2}=2ta^{2}-\frac{2}{3}\sum_{j=1}^{9}\left(u_{j}^{2}+u_{j}v_{j}+v_{j}^{2}\right).
\]

Let us suppose moreover that $\G$ is the class of an irreducible
curve which is not among the $18$ curves $A_{1},\dots,B_{9}$. Then
$a\in\frac{1}{3}\NN^{*}$ and the intersection numbers $\G A_{j},\G B_{j}$
are positive or zero: 
\[
\left\{ \begin{array}{c}
2a_{j}\geq b_{j}\\
2b_{j}\geq a_{j}
\end{array}\right.,
\]
which inequalities are equivalent to 
\begin{equation}
u_{j}\geq0,\,\text{ and }v_{j}\geq0.\label{eq:ineqUiVi}
\end{equation}
.

\subsubsection{Polarizations\label{subsec:Polarizations}}

Let be $u\in\ZZ$ and define {\small{}{}{} 
\[
D=uL-\sum_{j=1}^{9}(A_{j}+B_{j}).
\]
}The $18$ curves $A_{1},B_{1},\dots,A_{9},B_{9}$ have degree $1$
for $D$: $DA_{k}=DB_{k}=1$. With the same notations, we show: 
\begin{prop}
\label{ample} The minimal integer $u_{0}$ such that for $u\geq u_{0}$
the divisor $D$ is ample is given in the following table (according
to the cases of $L^{2}$):\\
\begin{tabular}{|c|c|c|c|}
\hline 
$L^{2}=2\mod6$  & $L^{2}=2$  & $L^{2}=8$ or $14$  & $L^{2}\geq20$\tabularnewline
\hline 
$u_{0}$  & $4$  & $2$  & $1$\tabularnewline
\hline 
$L^{2}=0\mod18$  & $L^{2}=18$  & $L^{2}\geq36$  & \tabularnewline
\hline 
$u_{0}$  & $2$  & $1$  & \tabularnewline
\hline 
$L^{2}=6\mod18$  & $L^{2}=6$  & $L^{2}\geq24$  & \tabularnewline
\hline 
$u_{0}$  & $3$  & $1$  & \tabularnewline
\hline 
$L^{2}=12\mod18$  & $L^{2}=12$  & $L^{2}\geq30$  & \tabularnewline
\hline 
$u_{0}$  & $2$  & $1$  & \tabularnewline
\hline 
\end{tabular}
\end{prop}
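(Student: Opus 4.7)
The plan is to verify the Nakai--Moishezon criterion on the K3 surface $X$. A direct computation using $L\cdot A_j = L\cdot B_j = 0$, $A_jB_j=1$, and the orthogonality of distinct $\mathbf{A}_2$-blocs gives
\[
D^2 = 2tu^2 - 18, \qquad D\cdot A_j = D\cdot B_j = 1,
\]
so the condition $D^2 > 0$ already forces $u$ to attain the entry $u_0$ of the table in many cases. Since $D\cdot L = 2tu > 0$, the class $D$ lies in the same component of the positive cone as the nef class $L$, hence on $X$ it suffices to prove $D\cdot\Gamma > 0$ for every irreducible $\cu$-curve $\Gamma$ distinct from the $A_j, B_j$.

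Writing such a $\Gamma$ as in Section \ref{subsec:Notations-and-divisibility},
\[
\Gamma = aL - \tfrac{1}{3}\sum_{j=1}^9 (u_j F_j + v_j G_j),\qquad a \in \tfrac{1}{3}\NN^{*},\ u_j,v_j\in\NN,
\]
the relation $\Gamma^2 = -2$ yields $\sum(u_j^2 + u_jv_j + v_j^2) = 3(ta^2+1)$, while $D\cdot\Gamma = 2tau - \sum(u_j + v_j)$. Next I would combine the inequality $u_j^2+u_jv_j+v_j^2 \geq \tfrac{3}{4}(u_j+v_j)^2$ (equivalent to $(u_j-v_j)^2\geq 0$) with the Cauchy--Schwarz inequality over the nine indices, obtaining $\sum(u_j+v_j) \leq 6\sqrt{ta^2+1}$. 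Consequently $D\cdot\Gamma > 0$ is ensured by $ta^2(tu^2 - 9) > 9$; plugging in the minimum admissible value of $a$ from Corollary \ref{cor:Structure-of-NS} ($a\geq 1$ when $L^2\equiv 2\mod 6$, $a\geq \tfrac{1}{3}$ otherwise) reproduces the thresholds $u_0$ of the table for all sufficiently large $L^2$.

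For the remaining small cases where the continuous bound is not tight---essentially $L^2\in\{2,6,8,12,14,18\}$---I would replace the continuous estimate by an integer maximization of $\sum(u_j+v_j)$ subject to $\sum(u_j^2+u_jv_j+v_j^2) = 3(ta^2+1)$ with $u_j,v_j\in\NN$: since the pair $(1,0)$ has the optimal linear-to-quadratic ratio, this integer maximum is easy to compute and is sharp. Minimality of $u_0$ is then established by exhibiting, for $u = u_0 - 1$, either $D^2 \leq 0$ or an explicit effective $\cu$-class $\Gamma$ with $D\cdot\Gamma \leq 0$ (effectivity following from Riemann--Roch since $L\cdot\Gamma = 2ta > 0$). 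The main obstacle will be the case $L^2=6$ with $u=2$, where $D^2 = 6 > 0$ but $u_0 = 3$: here one must actually produce an irreducible $\cu$-curve $\Gamma$ with $a = \tfrac{1}{3}$ realizing $D\cdot\Gamma = 0$, which requires the support pattern of $\Gamma$ to match the fixed coset $(L+v_{6k})/3$ in $\NS X)/(\ZZ L \oplus \calK_3)$ prescribed by Remark \ref{rem:The-classes-v6t}, together with the support constraint of Corollary \ref{cor:Structure-of-NS} (at least eight non-integer coefficients $\alpha_j,\beta_j$ when $L^2\equiv 6 \mod 18$).
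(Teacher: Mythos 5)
Your approach is sound but genuinely different from the paper's. The paper's proof is a short lattice-theoretic argument: it checks that $D^{2}>0$ and that $D^{\perp}\subset\NS X)$ contains no vectors of square $-2$, then invokes the fact that the ample cone is a fundamental domain for the Weyl group to place $D$ in the ample chamber; the irreducibility of the $A_{j},B_{j}$ and the nefness of $L$ are deduced \emph{a posteriori} from the ampleness of $D$. Your Nakai--Moishezon verification runs in the opposite logical direction: it takes the curves as given and bounds $D\cdot\Gamma$ directly. This is more work, but it is arguably more faithful to the statement as written, and it naturally produces the minimality half of the claim, which the paper's proof does not address at all. Your computations $D^{2}=2tu^{2}-18$, $D\cdot\Gamma=2tau-\sum(u_{j}+v_{j})$, the estimate $\sum(u_{j}+v_{j})\le6\sqrt{ta^{2}+1}$, and the reduction to the minimal admissible $a$ (legitimate because $2tau-6\sqrt{ta^{2}+1}$ is increasing in $a$ once $tu^{2}>9$) are all correct.

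One concrete caveat: the set of cases escaping the continuous bound is larger than $\{2,6,8,12,14,18\}$. With $a=\tfrac{1}{3}$ and $u=1$ your criterion $ta^{2}(tu^{2}-9)>9$ becomes $t(t-9)>81$, which requires $t\ge15$, i.e.\ $L^{2}\ge30$; it therefore fails for $L^{2}=24$, where the table nevertheless asserts $u_{0}=1$. The integer refinement does rescue this case (here $\sum(u_{j}^{2}+u_{j}v_{j}+v_{j}^{2})=7$ forces $\sum(u_{j}+v_{j})\le7<8=2tau$), but you must run that refinement for every $L^{2}\equiv0\mod6$ with $L^{2}<30$ and for the first few values of $a$, not only the minimal one, before the continuous bound takes over; and for the ampleness direction it is harmless, but for the minimality direction the exhibited class must genuinely lie in $\NS X)$, i.e.\ match the glue vector of Remark \ref{rem:The-classes-v6t}, exactly as you note for $L^{2}=6$, $u=2$, where a class of the form $\tfrac{1}{3}L-\tfrac{1}{3}(H_{4}+H_{6}+H_{7}+H_{8})$ with $H_{j}\in\{F_{j},G_{j}\}$ chosen to match $3w_{2}$ does exist and is effective by Riemann--Roch since $L\cdot\Gamma=2>0$.
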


\begin{proof}
We start by defining $D=u_{0}L-\sum_{j=1}^{9}(A_{j}+B_{j})$ with
$u_{0}$ as defined in the Table. We check that $D^{2}>0$ and $D^{\perp}$
contains no vector with square $-2$ (for example when the coefficients
on the diagonal of a Gram matrix obtained from a base of $D^{\perp}$
are multiples of $-4$). Using that the ample cone is a fundamental
domain for the Weyl group (the reflexion group generated by reflection
by vectors of square $-2$), we can choose $D$ as an ample class.
We have $DA_{j}=1$; if $A_{j}=C+C'$ with effective divisors $C,C'$,
then $DC$ or $DC'$ is $0$ thus, since $D$ is ample, $C$ or $C'$
is $0$, which proves that $A_{j}$ is a $\cu$-curve. Using that
fact, one easily check that $L$ is nef. Then for $u\geq u_{0}$,
the divisor $(u-u_{0})L+D$ is also ample, which proves the result. 
\end{proof}

\subsection{Fourier--Mukai partners and generalized Kummer structures}

Recall that for a K3 surface (resp. an abelian surface) $X$, a Fourier--Mukai
partner of $X$ is a K3 surface (resp. an abelian surface) $Y$ such
that there is an isomorphism of Hodge structures 
\[
(T(Y),\CC\o_{Y})\simeq(T(X),\CC\o_{X}),
\]
where $\o_{X}$ is a generator of $H^{0}(X,\Omega_{X}^{2})$, and
$T(X)$ is the transcendental lattice. The set of isomorphism classes
of Fourier--Mukai partners of $X$ is denoted by $\text{FM}(X)$.

Let $A$ be an abelian surface and $G_{A}$ be an order $3$ automorphism
group of $A$ acting symplectically. Let $X=\Km_{3}(A)$ (or sometimes
$\Km_{3}(A,G_{A})$ or $\Km_{3}(A,J_{A})$, where $\la J_{A}\ra=G_{A}$)
be the minimal resolution of the quotient surface $A/G_{A}$; since
$G_{A}$ is symplectic, $X$ is a generalized Kummer surface.

An isomorphism class of pairs $(B,G_{B})$ where $B$ is an abelian
surface and $G_{B}$ is an order $3$ symplectic automorphism group
such that $\Km_{3}(B)\simeq X$ is called a \textit{generalized Kummer
structure} on $X$. Let us denote by $\mathcal{K}(X)$ the set of
these isomorphism classes.

In order to state our results on a link between $\mathcal{K}(X)$
and Fourier--Mukai partners of $A$, let us recall some results and
notations in Barth's article \cite{Barth}. According to \cite[Section 1.3]{Barth},
the $G_{A}$-invariant part of $H_{2}(A,\ZZ)$ is a rank $4$ lattice
${\bf L}_{A}$, for which there is a basis $g_{1},\dots,g_{4}$ (in
the notions of \cite{Barth}, these are $g_{1}=\g_{1},g_{2}=\g_{2},g_{3}=\g_{3},g_{4}=\g_{3}+\g_{4}$)
with Gram matrix 
\[
\left(\begin{array}{cccc}
0 & 1 & 0 & 0\\
1 & 0 & 0 & 0\\
0 & 0 & 2 & 3\\
0 & 0 & 3 & 6
\end{array}\right).
\]
For $X=\Km_{3}(A)$, the lattice ${\bf L}_{X}\subset H^{2}(X,\ZZ)$
which is orthogonal to the $18$ $(-2)$-curves in $X$ has rank $4$;
it is generated by elements $\zeta_{1},\dots,\zeta_{4}$ with intersection
matrix 
\[
\left(\begin{array}{cccc}
0 & 3 & 0 & 0\\
3 & 0 & 0 & 0\\
0 & 0 & 6 & 3\\
0 & 0 & 3 & 2
\end{array}\right).
\]
The canonical rational map 
\[
\pi_{A}:A\dashrightarrow\Km_{3}(A)=X
\]
induces the morphisms 
\[
\pi_{A*}:{\bf L}_{A}\to{\bf L}_{X},\,\,\,\pi_{A}^{*}:{\bf L}_{X}\to{\bf L}_{A}
\]
which are such that 
\[
\pi_{A*}(g_{i})=\zeta_{i}\text{ for }i\leq3,\,\,\,\pi_{A*}(g_{4})=3\zeta_{4}
\]
and 
\[
\pi_{A}^{*}(\zeta_{i})=3g_{i}\text{ for }i\leq3\text{, }\pi_{A}^{*}(\zeta_{4})=g_{4},
\]
so that $\pi_{A*}\pi_{A}^{*}:{\bf L}_{X}\to{\bf L}_{X}$ and $\pi_{A}^{*}\pi_{A*}:{\bf L}_{A}\to{\bf L}_{A}$
are the multiplication by $3$ maps. The lattice $\pi_{A*}({\bf L}_{A})$
has index $3$ in ${\bf L}_{X}$, and it is easy to check that $\pi_{A*}({\bf L}_{A})$
is isometric to ${\bf L}_{A}(3)$. Here, for a lattice $L:=(L,(\,,\,))$
and a non-zero integer $m$, we define the lattice $L(m)$ by $L(m):=(L,m(\,,\,))$.

The Picard number of $A$ is either $3$ or $4$ ; we suppose that
we are in the generic case so that the Picard number is $3$. Then
the $G_{A}$-invariant part of $\NS A)$ is generated by a divisor
$L_{A}$ and the transcendental lattice of $A$ is $T(A)=L_{A}^{\perp}\subset{\bf L}_{A}$
(see \cite{Barth}). The orthogonal complement in $\NS X)$ of the
$18$ $\cu$-curves on $X$ (which are the components of the exceptional
loci of the resolution $X\to A/G_{A}$) is generated by a divisor
$L_{X}=\sum_{j=1}^{4}n_{j}\zeta_{j}\in{\bf L}_{X}$ (for some coprime
integers $n_{j}\in\ZZ$) which is such that $L_{X}^{2}=0\text{ or }2\mod6$,
moreover:\\
 $\bullet$ if $L_{X}^{2}=2\mod6$, then $\gcd(n_{4},3)=1$ and 
\[
L_{A}=\pi_{A}^{*}(L_{X})=3n_{1}g_{1}+3n_{2}g_{2}+3n_{3}g_{3}+n_{4}g_{4}
\]
$\bullet$ if $L_{X}^{2}=0\mod6$, then $n_{4}=3n_{4}'$ for some
$n_{4}'\in\ZZ$, $\gcd(n_{1},n_{2},n_{3},3)=1$ and 
\[
L_{A}=\tfrac{1}{3}\pi_{A}^{*}(L_{X})=n_{1}g_{1}+n_{2}g_{2}+n_{3}g_{3}+n_{4}'g_{4}.
\]
The orthogonal complement of $L_{X}$ and the $18$ $\cu$-curves
on $X$ is the transcendental lattice $T(X)$ of $X=\Km_{3}(A)$;
it is contained in ${\bf L}_{X}$. One has $\pi_{A}^{*}L_{X}=\nu L_{A}$
with $\nu\in\{1,3\}$. 
\begin{thm}
\label{thm:Fourier-Mukai}Let $(A,G_{A})$ and $(B,G_{B})$ be two
abelian surfaces with an order $3$ symplectic automorphism group
and let $X=\Km_{3}(A)$. Suppose that $X$ has Picard number $19$
and $L_{X}^{2}=2\mod6$. We have $\Km_{3}(B)\simeq\Km_{3}(A)=X$ (i.e.
$\{(B,G_{B})\}\in\mathcal{K}(X)$) if and only if $B$ is a Fourier--Mukai
partner of $A$. 
\end{thm}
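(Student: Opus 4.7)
The approach is to establish, under the hypothesis $L_X^2\equiv 2\mod 6$, a canonical Hodge isometry $T(X)\simeq T(A)(3)$, from which the theorem follows by a symmetry between $A$ and $B$.

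First I would carry out the key lattice computation. For $v=\sum_{i=1}^{4} y_i \zeta_i\in{\bf L}_X$, a direct use of the given intersection matrix gives $(v,L_X)=3n_2 y_1 + 3n_1 y_2 + (6n_3+3n_4)y_3+(3n_3+2n_4)y_4$, which taken modulo $3$ yields $2n_4 y_4\equiv 0\mod 3$. Since $L_X^2\equiv 2\mod 6$ forces $3\nmid n_4$, we get $3\mid y_4$, say $y_4=3y_4'$; using the formulas $\pi_A^*(\zeta_i)=3g_i$ for $i\leq 3$ and $\pi_A^*(\zeta_4)=g_4$ this gives $\pi_A^*(v)=3\,w$ with $w=y_1 g_1+y_2 g_2+y_3 g_3+y_4' g_4$. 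Dividing the identity $(v,L_X)=0$ by $3$ yields exactly the relation $(w,L_A)=0$ needed for $w\in T(A)=L_A^\perp\cap{\bf L}_A$. Hence $\pi_A^*(T(X))=3\,T(A)$, and because $(\pi_A^* v,\pi_A^* v')_A=3\,(v,v')_X$, the map $v\mapsto \frac{1}{3}\pi_A^*(v)$ is a Hodge isometry $T(X)\to T(A)(3)$.

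For the forward direction, suppose $\Km_3(B)\simeq\Km_3(A)=X$. Applying the identification above to both $(A,G_A)$ and $(B,G_B)$ produces Hodge isometries $T(A)(3)\simeq T(X)\simeq T(B)(3)$, so $T(A)\simeq T(B)$ as Hodge lattices, i.e.\ $B$ is a Fourier--Mukai partner of $A$. For the converse, assume $T(A)\simeq T(B)$ as Hodge lattices and set $X_A=\Km_3(A)$, $X_B=\Km_3(B)$. The previous step provides a Hodge isometry $T(X_A)\simeq T(X_B)$; since the discriminant form of $T(X_A)$ determines $L_{X_A}^2$, one has $L_{X_A}^2=L_{X_B}^2$, and Theorem \ref{thm:StrucureOfNeronSeveri}(2) gives $\NS X_A)\simeq\NS X_B)\simeq\calK_{6k+2}$ for a common integer $k$. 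Merging these data via Nikulin's gluing theory for overlattices, one extends the Hodge isometry on transcendentals and the isometry of N\'eron--Severi lattices to a Hodge isometry $H^2(X_A,\ZZ)\simeq H^2(X_B,\ZZ)$; composing with reflections in $\cu$-classes if needed so as to send a K\"ahler class of $X_A$ into the K\"ahler cone of $X_B$, the strong Torelli theorem for K3 surfaces yields the desired $X_A\simeq X_B$.

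The main obstacle is the converse direction, where assembling the Hodge isometry of transcendental lattices and the isometry of N\'eron--Severi lattices into a single Hodge isometry of $H^2$ requires a careful Nikulin-style discriminant-gluing argument together with the K\"ahler-cone adjustment needed to apply Torelli. The forward direction, by contrast, reduces cleanly to the explicit lattice identity $T(X)\simeq T(A)(3)$, whose proof depends crucially on the divisibility condition $3\nmid n_4$ extracted from the assumption $L_X^2\equiv 2\mod 6$.
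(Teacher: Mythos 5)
Your first paragraph reproves the paper's key Lemma \ref{lem:The-canonical-rational} ($T(X)\simeq T(A)(3)$ as Hodge lattices when $L_{X}^{2}=2\mod6$) by an explicit coordinate computation with $\pi_{A}^{*}$, where the paper instead runs an index argument with $\pi_{A*}$ and Lemma \ref{fact:index}; both hinge on the same divisibility fact ($\gcd(n_{4},3)=1$), and your version is correct. One sentence is missing: you only establish the inclusion $\pi_{A}^{*}(T(X))\subseteq3\,T(A)$, whereas the isometry requires surjectivity of $v\mapsto\tfrac{1}{3}\pi_{A}^{*}(v)$ onto $T(A)$. This follows from the same computation: for $w=y_{1}g_{1}+y_{2}g_{2}+y_{3}g_{3}+y_{4}'g_{4}\in T(A)$ the class $v=y_{1}\zeta_{1}+y_{2}\zeta_{2}+y_{3}\zeta_{3}+3y_{4}'\zeta_{4}$ satisfies $3(v,L_{X})=(\pi_{A}^{*}v,L_{A})=3(w,L_{A})=0$, so $v\in T(X)$ and $\tfrac{1}{3}\pi_{A}^{*}(v)=w$. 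The forward implication of the theorem then coincides with the paper's.

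The genuine divergence, and the place where your argument is incomplete, is the converse. The paper settles it in one line by citing \cite[Corollary 2.6]{HLOY2}: since $\rho(X)=19>2+\ell$ with $\ell\leq3$, the K3 surface $X$ is its own unique Fourier--Mukai partner, so a Hodge isometry $T(\Km_{3}(B))\simeq T(\Km_{3}(A))$ already forces $\Km_{3}(B)\simeq\Km_{3}(A)$. You instead sketch a direct proof via discriminant gluing and strong Torelli, which is a legitimate route (it amounts to reproving the cited result in this case), but the sketch skips the essential point. To assemble an abstract isometry ${\rm NS}(X_{A})\simeq{\rm NS}(X_{B})$ and a Hodge isometry $T(X_{A})\simeq T(X_{B})$ into an isometry of $H^{2}(X_{A},\ZZ)\simeq H^{2}(X_{B},\ZZ)$, the two must induce \emph{compatible} maps on the discriminant groups; this is not automatic, and one must invoke Nikulin's surjectivity criterion $O({\rm NS}(X_{A}))\twoheadrightarrow O(q_{{\rm NS}(X_{A})})$ --- available here because ${\rm NS}(X_{A})$ is even indefinite of rank $19$ with $\ell(q)\leq3\leq19-2$ --- to correct the N\'eron--Severi isometry until the glue data match. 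As written, ``merging these data via Nikulin's gluing theory'' names the toolbox without performing the step that makes the construction work; either supply that argument or replace the paragraph by the citation of the Fourier--Mukai counting theorem, as the paper does.
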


The proof is similar to \cite[Theorem 0.1]{HLOY} for the classical
Kummer surfaces. Let us prove the following result 
\begin{lem}
\label{lem:The-canonical-rational}Suppose that $L_{X}^{2}=2\mod6$.
The canonical rational map 
\[
\pi_{A}:A\dashrightarrow X=\Km_{3}(A)
\]
induces a Hodge isometry 
\[
\pi_{A*}:(T(A)(3),\CC\o_{A})\stackrel{}{\to}(T{}_{\Km_{3}(A)},\CC\o_{\Km_{3}(A)}).
\]
\end{lem}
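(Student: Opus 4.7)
The plan is to split the proof into three steps: (1) show $\pi_{A*}$ sends $T(A)(3)$ isometrically into $T(X)$; (2) show surjectivity via a divisibility-by-$3$ trick that uses the hypothesis $L_X^2\equiv 2\mod 6$; (3) note that Hodge compatibility is essentially formal.

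For step (1), I would start from the two computations already made in the preceding paragraphs: the projection formula $(\pi_{A*}u,v)_X=(u,\pi_A^*v)_A$ and the identity $\pi_A^*\pi_{A*}=3\cdot\mathrm{id}$. Combining them gives $(\pi_{A*}u,\pi_{A*}v)_X=3(u,v)_A$, so $\pi_{A*}\colon \mathbf{L}_A(3)\to\mathbf{L}_X$ is an isometric embedding. To check that $T(A)$ lands in $T(X)$: if $v\in T(A)$ then $(\pi_{A*}v,L_X)_X=(v,\pi_A^*L_X)_A=(v,L_A)_A=0$ (using $\pi_A^*L_X=L_A$, valid in the case $L_X^2\equiv 2\mod 6$); and $(\pi_{A*}v,r)_X=(v,\pi_A^*r)_A=0$ for each $(-2)$-curve $r$, since $r$ sits over a cusp of $A/G_A$.

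Step (2) is the main point. Since $[\mathbf{L}_X:\pi_{A*}(\mathbf{L}_A)]=3$, and since $L_X\notin\pi_{A*}(\mathbf{L}_A)$ (otherwise $L_A=\pi_A^*L_X$ would be $3$-divisible in $\mathbf{L}_A$, contradicting $\gcd(n_4,3)=1$), the quotient $\mathbf{L}_X/\pi_{A*}(\mathbf{L}_A)$ is generated by the class of $L_X$. Hence any $w\in T(X)\subset\mathbf{L}_X$ has a decomposition $w=\pi_{A*}(v)+kL_X$ with $v\in\mathbf{L}_A$ and $k\in\{0,1,2\}$. Pairing with $L_X$ and using $(w,L_X)_X=0$ gives $(v,L_A)_A=-kL_X^2$. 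Now a direct computation from $L_A=3n_1g_1+3n_2g_2+3n_3g_3+n_4g_4$ in Barth's basis (with Gram matrix in the statement of the excerpt) shows that $(v,L_A)_A\in 3\ZZ$ for every $v\in\mathbf{L}_A$, because in each intersection $(g_i,L_A)$ every contribution is visibly a multiple of $3$ (the coefficients $3n_1,3n_2,3n_3$ are divisible by $3$, and the $g_4$-coefficient $n_4$ is paired only with $g_3$ and $g_4$, both of which give $(g_4,\cdot)\in 3\ZZ$ because $(g_3,g_4)=3$ and $(g_4,g_4)=6$). Since $L_X^2\equiv 2\mod 3$ is coprime to $3$, we conclude $k\equiv 0\mod 3$, hence $k=0$; the resulting identity $(v,L_A)_A=0$ then forces $v\in T(A)$. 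This proves that $\pi_{A*}\colon T(A)\to T(X)$ is surjective, hence $\pi_{A*}\colon T(A)(3)\to T(X)$ is an isometry of lattices.

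For step (3), Hodge compatibility is immediate: $\pi_A$ factors through the quotient $A/G_A$ by a symplectically acting group, so the $G_A$-invariant form $\omega_A$ descends to a $2$-form on $A/G_A$ whose proper transform on the resolution $X$ is a nonzero scalar multiple of $\omega_X$. Equivalently, $\pi_A^*\omega_X\in\CC^*\omega_A$, and applying $\pi_{A*}$ (using $\pi_{A*}\pi_A^*=3$) gives $\pi_{A*}\omega_A\in\CC^*\omega_X$; so $\pi_{A*}$ sends $\CC\omega_A$ isomorphically onto $\CC\omega_X$. The main obstacle is step (2): without the hypothesis $L_X^2\equiv 2\mod 6$ the coprimality argument collapses (indeed in the case $L_X^2\equiv 0\mod 6$ one expects the natural map to fail to be an isometry, consistent with the different form of Theorem~\ref{thm:StrucureOfNeronSeveri} in that case).
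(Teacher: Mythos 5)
Your proof is correct, and the key surjectivity step is carried out by a genuinely different mechanism than the paper's. The paper isolates an elementary index lemma (if $N'\subset N$ has finite index and $M\subset N$, then $[M:M\cap N']$ divides $[N:N']$), applies it once to get $[T(X):\pi_{A*}(T(A))]\in\{1,3\}$, and then applies it a second time to the sublattice $T(X)\oplus\ZZ L_{X}$ of ${\bf L}_{X}$: since $\pi_{A*}(L_{A})=3L_{X}$ in the case $L_{X}^{2}\equiv2\bmod6$, the index of $\pi_{A*}(T(A))\oplus\ZZ\pi_{A*}(L_{A})$ in $T(X)\oplus\ZZ L_{X}$ equals $3[T(X):\pi_{A*}(T(A))]$, and the constraint that this divides $3$ forces the index to be $1$. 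You instead identify an explicit generator of the order-$3$ quotient ${\bf L}_{X}/\pi_{A*}({\bf L}_{A})$ (namely the class of $L_{X}$, using $\gcd(n_{4},3)=1$), write an arbitrary $w\in T(X)$ as $\pi_{A*}(v)+kL_{X}$, and kill $k$ by pairing with $L_{X}$ together with the Gram-matrix observation that $(v,L_{A})_{A}\in3\ZZ$ for all $v\in{\bf L}_{A}$ and the coprimality of $L_{X}^{2}$ to $3$. Both arguments ultimately exploit the same structural input --- that $L_{A}=\pi_{A}^{*}(L_{X})$ without division by $3$ precisely when $L_{X}^{2}\equiv2\bmod6$ --- but your version is more explicit and makes visible \emph{where} the obstruction sits (the $\zeta_{4}$-direction), while the paper's version is basis-free and reusable (the same index lemma also shows, as the paper remarks, that the index is $3$ when $L_{X}^{2}\equiv0\bmod6$, consistent with your closing observation). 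Your step (3) on the Hodge compatibility is standard and the paper does not spell it out; no issues there.
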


For the proof of Lemma \ref{lem:The-canonical-rational}, we will
use the following elementary result: 
\begin{lem}
\label{fact:index}Let $N$ be a lattice and let $N'\subset N$ be
a sub-lattice of finite index $k=[N:N']$, let $M\subset N$ be a
sub-lattice and let $M'=M\cap N'$. The index $[M:M']$ divides $[N:N']$. 
\end{lem}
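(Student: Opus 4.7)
The plan is to interpret the divisibility purely group-theoretically: the only structure I need is that $N,N',M,M'$ are abelian groups with $N'\subset N$ of finite index $k$ and $M'=M\cap N'$. The idea is to realize the quotient $M/M'$ as a subgroup of the finite group $N/N'$ and then invoke Lagrange's theorem.

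Concretely, I would consider the group homomorphism
\[
\varphi\colon M\hookrightarrow N\twoheadrightarrow N/N',
\]
obtained by composing the inclusion with the natural projection. Its kernel is precisely $\{m\in M:m\in N'\}=M\cap N'=M'$. By the first isomorphism theorem for abelian groups, $\varphi$ therefore induces an injective homomorphism $\overline{\varphi}\colon M/M'\hookrightarrow N/N'$.

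Since $[N:N']=k$ is finite by hypothesis, $N/N'$ is a finite abelian group of order $k$, and $\overline{\varphi}(M/M')$ is a subgroup of it. By Lagrange's theorem, the order $[M:M']$ of this subgroup divides the order $[N:N']=k$ of the ambient group, giving the claim. Because the argument is entirely formal there is no real obstacle; the only point to verify is the identification of the kernel of $\varphi$, which is immediate from the definition of $M'$.
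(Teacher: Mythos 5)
Your argument is correct and is essentially the paper's own proof: both realize $M/M'$ as a subgroup of the finite group $N/N'$ (you via the first isomorphism theorem applied to $M\to N/N'$, the paper by directly exhibiting the injective map $m+M'\mapsto m+N'$) and then conclude by Lagrange's theorem. Nothing further is needed.
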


\begin{proof}
The map $\phi:m+M'\in M/M'\to m+N'\in N/N'$ is a well defined morphism.
For $m\in M$, one has $\phi(m+M')=N'\in N/N'$ if and only if $m\in N'$
thus if and only if $m\in N'\cap M=M'$: the kernel of $\phi$ is
trivial. Thus the morphism $\phi$ is injective and identifying $M/M'$
with its image, by Lagrange Theorem the order of $M/M'$ divides the
order of $N/N'$. 
\end{proof}
\begin{proof}
(Of Lemma \ref{lem:The-canonical-rational}). Since $\pi_{A*}$ is
an isometry between ${\bf L}_{A}(3)$ and $\pi_{A*}({\bf L}_{A})$,
it restricts to an isometry from $T(A)(3)$ to $\pi_{A*}(T(A))$ (let
us remark that one may also prove that $\pi_{A*}(T(A))$ is isometric
to $T(A)(3)$ by using \cite[Proposition 1.1 \& Remark]{Inose} of
Inose).

Define $N={\bf L}_{X}$, $N'=\pi_{A*}({\bf L}_{A})$, $v=\pi_{A*}(L_{A})$,
$M=v^{\perp_{N}}=L_{X}^{\perp}(=T(X))$ and 
\[
M'=\pi_{A*}(L_{A}^{\perp})=\pi_{A*}(T(A))=M\cap N'.
\]
Since $[N:N']=3$, by Lemma \ref{fact:index}, we get that $[M:M']\in\{1,3\}$.

Suppose that $L_{X}^{2}=2\mod6$ and write 
\[
L_{A}=\pi_{A}^{*}(L_{X})=3n_{1}g_{1}+3n_{2}g_{2}+3n_{3}g_{3}+n_{4}g_{4},
\]
(we recall that $L_{X}=n_{1}\zeta_{1}+n_{2}\zeta_{2}+n_{3}\zeta_{3}+n_{4}\zeta_{4}$
with coprime integers $n_{1},\dots,n_{4}$). Then $v=3w$ for $w=L_{X}$.
Again by Lemma \ref{fact:index}, the lattice 
\[
M'\oplus\ZZ v=(M\oplus\ZZ w)\cap N',
\]
has index $1$ or $3$ in $M\oplus\ZZ w$. Since 
\[
[M\oplus\ZZ w:M'\oplus\ZZ v]=[M:M'][\ZZ w:\ZZ v]=3[M:M'],
\]
that forces $[M:M']=1$, which implies that 
\[
\pi_{A*}(T(A))=\pi_{A*}(L_{A}{}^{\perp})=L_{X}^{\perp}=T(X),
\]
and therefore $T(X)$ is isometric to $T(A)(3)$, since $\pi_{A*}(T(A))\simeq T(A)(3)$. 
\end{proof}
\begin{rem}
For any $k\in\ZZ$, the polarization $L_{X}=\zeta_{1}+k\zeta_{2}$
is such that $L_{X}^{2}=6k$ (as remarked by Barth in \cite{Barth}).
Using the Gram matrix of the $\zeta_{k}$'s, one finds that the transcendental
lattice of $X$ is generated by elements $\zeta_{1}-k\zeta_{2},\zeta_{3},\zeta_{4}$,
whereas $T(A)$ is generated by $g_{1}-kg_{2},g_{3},g_{4}$. Thus,
for these examples one obtains that $\pi_{A*}(T(A))$ (generated by
$\zeta_{1}-k\zeta_{2},\zeta_{3},3\zeta_{4}$) has index $3$ in $T(X)$.
In fact, using Lemma \ref{fact:index}, one can prove more generally
that for any polarization $L_{X}$ such that $L_{X}^{2}=6k$, the
lattice $\pi_{A*}(T(A))$ has index $3$ in $T(X)$. 
\end{rem}

Let us prove Theorem \ref{thm:Fourier-Mukai}. 
\begin{proof}
Suppose that $L_{X}^{2}=2\mod6$. From Lemma \ref{lem:The-canonical-rational},
there exists an isomorphism of Hodge structures 
\[
(T(B),\CC\o_{B})\simeq(T(A),\CC\o_{A})
\]
if and only if there is an isomorphism of Hodge structures 
\[
(T(\Km_{3}(B)),\CC\o_{\Km_{3}(B)})\simeq(T(\Km_{3}(A)),\CC\o_{\Km_{3}(A)}).
\]
Therefore $B$ is a Fourier--Mukai partner of $A$ if and only if
$\Km_{3}(B)$ is a Fourier--Mukai partner of $\Km_{3}(A)$. By \cite[Corollary 2.6]{HLOY2},
since $X=\Km_{3}(A)$ has Picard number $19>2+\ell$ (here $\ell$
is the length of the discriminant group of $\NS X)$, which is also
the length of $T(X)$, and therefore is $\leq3$), one has $\{\Km_{3}(A)\}=\text{FM}(\Km_{3}(A))$
(the isomorphism class of $\Km_{3}(A)$ is the unique Fourier--Mukai
partner of $\Km_{3}(A)$), therefore $B$ is a Fourier--Mukai partner
of $A$ if and only if $\Km_{3}(B)\simeq\Km_{3}(A)=X$. In other words:
$B$ is a Fourier--Mukai partner of $A$ if and only if $\Km_{3}(B)\in\mathcal{K}(X)$. 
\end{proof}
By \cite[Proposition 5.3]{BriMac}, the number $|\text{FM}(A)|$ of
Fourier--Mukai partners of $A$ is finite. So, in case $L_{X}^{2}=2\mod6$,
to find the number of generalized Kummer structures on $X$ reduces
to find the number of conjugacy classes of order $3$ symplectic groups
$G'$ on a finite number of abelian surfaces $B$ such that $\Km_{3}(B,G')\simeq\Km_{3}(A)$.

\section{Constructions of Kummer structures}

\subsection{$\protect\cu$-classes with intersection one with $A_{1}$\label{subsec:-classes with intersection-1-with-A_=00003D00003D00007B1=00003D00003D00007D}}

Let $A_{1},B_{1},\dots,A_{9},B_{9}$ be a generalized Nikulin configuration
on a generalized Kummer surface $X$. Let $L$ be the big and nef
generator of the orthogonal complement of these $18$ curves and let
$t\in\NN$ such that $L^{2}=2t$. According to the two possible cases,
$L^{2}=2\text{\,mod }6$ and $L^{2}=0\mod6$, we denote by $k\in\NN$
the integer such that $t=1+3k$ in the first case, and such that $t=3k$
in the second case.

Our aim is to construct a $(-2)$-curve $B_{1}'\neq B_{1}$ in the
lattice $\mathbb{L}$ generated by $L,A_{1},B_{1}$ such that $A_{1}B_{1}'=1$,
so that $(A_{1},B_{1}')$ is another ${\bf A}_{2}$-configuration.
As we will see in the next Section, under some conditions on $t,$
such a curve exists and is unique. In order to prove this result,
we will study the properties of $L'$, the orthogonal complement of
$A_{1},B_{1}'$ in $\mathbb{L}$.

By Theorem \ref{thm:StrucureOfNeronSeveri}, an element in the lattice
generated by $L,A_{1},B_{1}$ has the form 
\[
B_{1}'=aL-(a_{1}A_{1}+b_{1}B_{1}),
\]
for integers $a,a_{1},b_{1}$. The class $B_{1}'$ satisfies $B'_{1}A_{1}=1$
if and only if $2a_{1}-b_{1}=1,$ which gives 
\[
B_{1}'=aL-(a_{1}A_{1}+(2a_{1}-1)B_{1}).
\]
Moreover since we search for a $\cu$-curve, we have 
\[
B_{1}'^{2}=-2=2ta^{2}-6a_{1}^{2}+6a_{1}-2,
\]
which is equivalent to 
\[
3a_{1}(a_{1}-1)-ta^{2}=0.
\]
We can write that condition as 
\[
3\left((2a_{1}-1)^{2}-1\right)-4ta^{2}=0,
\]
which is equivalent to 
\begin{equation}
3(2a_{1}-1)^{2}-4ta^{2}=3.\label{eq:TT1}
\end{equation}

\subsubsection{\label{subsec:const-B-L-form-Teq1mod3}Case $t=1\protect\mod3$}

Since we suppose that $t=1\mod3$, the integer $a$ in Equation \eqref{eq:TT1}
must be divisible by $3$. Let us define the integers $x_{0},y_{0}$
by 
\[
x_{0}=2a_{1}-1,\,\,a=3y_{0}.
\]
Equation \eqref{eq:TT1} is then equivalent to the Pell-Fermat equation
\begin{equation}
x_{0}^{2}-12ty_{0}^{2}=1.\label{eq:Pell-Fermat-12t}
\end{equation}
Since $t=1\mod3$, the integer $12t$ is never a square and there
exist non-trivial solutions. Let us fix such a solution $(x_{0},y_{0})$
(we observe that $x_{0}$ is necessarily odd). Then 
\[
B_{1}'=3y_{0}L-(a_{1}A_{1}+(2a_{1}-1)B_{1})
\]
with $2a_{1}-1=x_{0}$ is such that $B_{1}'^{2}=-2$ and $B_{1}'A_{1}=1$,
and conversely, all $\cu$-classes with these two properties are obtained
in that way.

Let us search for the class of $L'=\a L-(\b_{1}A_{1}+\b_{1}'B_{1})$,
$\a,\b_{1},\b_{1}'\in\ZZ$, such that $L'$ generates the orthogonal
complement of $A_{1},B_{1}',A_{2},B_{2},\dots,A_{9},B_{9}$. From
$L'A_{1}=0$, we obtain 
\[
\b_{1}'=2\b_{1}.
\]
Since also $L'B_{1}'=0$, we get 
\[
2\a at=3\b_{1}(2a_{1}-1),
\]
in other words: 
\[
2\a ty_{0}=\b_{1}x_{0}.
\]
Since by equation \ref{eq:Pell-Fermat-12t} the integers $x_{0},y_{0}$
are co-primes and $x_{0}$ is co-prime to $2t$, we get: 
\[
\b_{1}=2ty_{0},\,\,\a=x_{0},
\]
and the class $L'=x_{0}L-2ty_{0}(A_{1}+2B_{1})$ is primitive with
$L'L>0$, and such that 
\[
L'^{2}=2tx_{0}^{2}-24t^{2}y_{0}^{2}=2t.
\]
For any solution $(x_{0},y_{0})$ of the Pell-Fermat equation \eqref{eq:Pell-Fermat-12t},
the classes 
\[
\begin{array}{l}
B_{1}'=3y_{0}L+(\tfrac{1}{2}(x_{0}+1)A_{1}+x_{0}B_{1})\\
L'=x_{0}L-2ty_{0}(A_{1}+2B_{1})
\end{array}
\]
in $\mathbb{L}$ have the properties $B_{1}'^{2}=-2,\,B_{1}'A_{1}=1,\,L'A_{1}=L'B_{1}'=0$,
$L'^{2}=2t,\,L'L>0$ and all the classes $B,L_{0}$ with these properties
are obtained in that way.

\subsubsection{\label{subsec:Constr-B-L-form-teq0mod3}Case $t=0\protect\mod3$}

Let us suppose that $t=0\mod3$, and let $k\in\NN^{*}$ such that
$t=3k$. Then the equation 
\[
3(2a_{1}-1)^{2}-4ta^{2}=3
\]
is equivalent to 
\[
(2a_{1}-1)^{2}-4ka^{2}=1.
\]
By defining $x_{0}=2a_{1}-1,\,\,y_{0}=a$, we are reduced to the Pell-Fermat
equation 
\begin{equation}
x_{0}^{2}-4ky_{0}^{2}=1,\label{eq:Pell-Fermat-4k}
\end{equation}
($x_{0}$ is necessarily odd). Let us fix such a solution $(x_{0},y_{0})$.
Then 
\[
B_{1}'=y_{0}L-(a_{1}A_{1}+(2a_{1}-1)B_{1}),
\]
with $2a_{1}-1=x_{0}$ satisfies $B_{1}'^{2}=-2,\,B_{1}'A_{1}=1$.

Let us search for the class of $L'=\a L-(\b_{1}A_{1}+\b_{1}'B_{1})$,
$\a,\b_{1},\b_{1}'\in\ZZ$, such that $L'$ generates the orthogonal
complement of $A_{1},B_{1}',A_{2},B_{2},\dots,A_{9},B_{9}$. From
$L'A_{1}=0$, we obtain 
\[
\b_{1}'=2\b_{1},
\]
and since $L'B_{1}'=0$, we get 
\[
2\a at=3\b_{1}(2a_{1}-1),
\]
in other words: 
\[
2\a ky_{0}=\b_{1}x_{0}.
\]
Since $x_{0},y_{0}$ are co-primes, and $L'$ is primitive, we get:
\[
\b_{1}=2ky_{0},\,\,\a=x_{0}
\]
and $L'=x_{0}L-2ky_{0}(A_{1}+2B_{1}),$ thus 
\[
L'^{2}=2tx_{0}^{2}-24k^{2}y_{0}^{2}=2t(x_{0}^{2}-4ky_{0}^{2})=2t.
\]
For any solution $(x_{0},y_{0})$ of the Pell-Fermat equation \eqref{eq:Pell-Fermat-4k},
the classes 
\[
\begin{array}{l}
B_{1}'=y_{0}L+(\tfrac{1}{2}(x_{0}+1)A_{1}+x_{0}B_{1})\\
L'=x_{0}L-2ky_{0}(A_{1}+2B_{1})
\end{array}
\]
have the properties we required: $B_{1}'^{2}=-2,\,B_{1}'A_{1}=1,\,L'A_{1}=L'B_{1}'=0$
and all classes with these properties are obtained in that way.


\subsection{\label{subsec:Exisentence-and-unicity-of-B1}Existence and unicity
of $B_{1}'$}

\subsubsection{\label{subsc:B1-Irred-Cas-t=00003D00003D00003D1mod3}Case $t=1\protect\mod3$}

Let $(x_{0},y_{0})$ be a non-trivial solution of the Pell-Fermat
equation \eqref{eq:Pell-Fermat-12t}. In Section \ref{subsec:const-B-L-form-Teq1mod3},
we defined the classes 
\[
\begin{array}{l}
L'=x_{0}L-2ty_{0}(A_{1}+2B_{1}),\\
B_{1}'=3y_{0}L-(\frac{1}{2}(x_{0}+1)A_{1}+x_{0}B_{1}).
\end{array}
\]
Let us prove the following result: 
\begin{prop}
\label{prop:L-is-Big-and-Nef}Suppose that $(x_{0},y_{0})$ is the
fundamental solution of the Pell-Fermat equation \eqref{eq:Pell-Fermat-12t}.
The class $L'$ is nef. Suppose that $L'\G=0$ for the class $\G$
of an irreducible $\cu$-curve. Then $\G$ is in $\{A_{1},B_{1}',A_{2},\dots,A_{9},B_{9}\}$. 
\end{prop}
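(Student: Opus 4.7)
My plan splits the proposition into two claims: (a) $L'$ is nef, i.e., $L'\cdot \Gamma\geq 0$ for every irreducible $(-2)$-curve $\Gamma$; and (b) equality forces $\Gamma$ to belong to the listed set of $18$ curves.

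For (a), I first note that $L'^{2}=2t>0$ and $L'\cdot L=2tx_{0}>0$, placing $L'$ in the positive cone, so by Riemann--Roch $L'$ is effective. A direct computation using $L'=x_{0}L-2ty_{0}F_{1}$ (with $F_{1}=A_{1}+2B_{1}$) handles the ``known'' $(-2)$-curves: $L'\cdot A_{j}=0$ for all $j$, $L'\cdot B_{j}=0$ for $j\neq 1$, $L'\cdot B_{1}=6ty_{0}>0$, and $L'\cdot B_{1}'=0$ by construction. For any other irreducible $(-2)$-curve $\Gamma$, I use the basis expansion of Section~3.1: $\Gamma=aL-\tfrac{1}{3}\sum_{j}(u_{j}F_{j}+v_{j}G_{j})$, with $a\in\ZZ_{\geq 1}$ (since $L^{2}\equiv 2\bmod 6$ forces $a\in\ZZ$, and $a=0$ would force $\Gamma$ to lie among the $A_{j},B_{j}$ by nefness of $L$) and $u_{j},v_{j}\in\ZZ_{\geq 0}$ (from $\Gamma\cdot A_{j}=v_{j}$ and $\Gamma\cdot B_{j}=u_{j}$). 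Setting $n=2u_{1}+v_{1}$ gives $L'\cdot\Gamma=2t(ax_{0}-y_{0}n)$.

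Assuming $L'\cdot\Gamma<0$, the inequality $\Gamma^{2}\geq-2$ combined with $(2u_{1}+v_{1})^{2}\leq 4(u_{1}^{2}+u_{1}v_{1}+v_{1}^{2})$ yields $n^{2}\leq 12ta^{2}+12$; squaring $ax_{0}<y_{0}n$ and using $x_{0}^{2}-12ty_{0}^{2}=1$ then produces $a<2\sqrt{3}\,y_{0}$. To close the argument I would exploit the multiplicative identity
\[
(x_{0}n-12ty_{0}a)^{2}-12t(x_{0}a-y_{0}n)^{2}=(x_{0}^{2}-12ty_{0}^{2})(n^{2}-12ta^{2})=n^{2}-12ta^{2}
\]
together with the group law on Pell--Fermat solutions: combining the hypothetical pair $(a,n)$ with $(x_{0},y_{0})$ should produce a strictly smaller positive solution of $x^{2}-12ty^{2}=1$, contradicting the fundamental character of $(x_{0},y_{0})$. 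This Vieta-style descent — turning the coarse bound $a<2\sqrt{3}\,y_{0}$ into an actual contradiction against minimality — is the technical heart of the proof and the main obstacle.

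For (b), the sublattice $L'^{\perp}\cap \NS X)$ has rank $18$ and contains the linearly independent classes $A_{1},B_{1}',A_{2},\dots,B_{9}$, which span a root system of type $A_{2}^{9}$. Mirroring Bertin's identification of the root system of $\calK_{3}=L^{\perp}$ as exactly $A_{2}^{9}$, the root system of $L'^{\perp}$ is again $A_{2}^{9}$ with its $54$ roots. Among these, only the listed $18$ can be classes of effective irreducible curves: the negatives are not effective, and each $A_{j}+B_{j}$ (respectively $A_{1}+B_{1}'$) is manifestly reducible in the given basis. This yields the second claim.
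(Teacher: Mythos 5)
Your setup coincides with the paper's: the same expansion $\G=aL-\tfrac13\sum_j(u_jF_j+v_jG_j)$ with $a\in\ZZ_{\ge1}$, $u_j,v_j\ge0$, the computation $L'\G=2t(ax_0-y_0n)$ with $n=2u_1+v_1$, and the idea of squaring the inequality $ax_0\le y_0n$ against $\G^2=-2$. But the part you yourself label ``the main obstacle'' is exactly where the proof lives, and the route you propose for it does not close. First, the bound $a<2\sqrt3\,y_0$ is too coarse because you discard the terms $-3v_1^2y_0^2$ and $\tfrac23S$ when squaring: in the end one needs $a=3A$ with $A\le y_0$ to invoke minimality of the fundamental solution, and $a<2\sqrt3\,y_0$ only gives $A<\tfrac{2}{\sqrt3}y_0\approx1.15\,y_0$, which does not force $A\le y_0$. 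The paper keeps those terms, obtaining $\tfrac12v_1^2+\tfrac16a^2/y_0^2+\tfrac23S\le2$, which with $v_1=1$ yields $a\le3y_0$. Second, the pair $(n,a)$ does not satisfy the Pell equation: one computes $n^2-12ta^2=12-4S-3v_1^2$, so your multiplicative identity does not produce a new solution of $x^2-12ty^2=1$ and no Vieta descent can be run until one has first proved $S=0$ and $v_1=1$ (and then $3\mid n$, $3\mid a$, so that $(n/3,a/3)$ solves the Pell equation). Establishing $S=0$, $v_1=1$ is a genuine piece of the argument that is absent from your sketch: it uses the $3$-divisibility constraints of Corollary~\ref{cor:Structure-of-NS} (a coefficient in $\tfrac13\ZZ\setminus\ZZ$ forces at least $12$ nonintegral coefficients, hence $S\ge5>3$; integral coefficients with $0<S<3$ are impossible; and $v_1=0$ leads to $u_1^2=3(ta^2+1)$, which is never a square since $t\equiv1\bmod3$).

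For the second claim, the paper gets the identification of all $\cu$-curves orthogonal to $L'$ as a byproduct of the same computation (the case analysis terminates with $\G=B_1'$ or $\G\in\{A_1,A_2,B_2,\dots\}$). Your alternative via root systems has its own gap: you assert that the root system of $L'^{\perp}\cap{\rm NS}(X)$ is exactly ${\bf A}_2^9$ ``mirroring Bertin,'' but Bertin's statement concerns $\calK_3=L^{\perp}$, and you would need to prove that $L'^{\perp}$ is isometric to $\calK_3$ (equivalently, that the primitive closure of $\langle A_1,B_1',A_2,\dots,B_9\rangle$ fills out $L'^{\perp}$ and carries no extra roots) before transferring the count of $54$ roots. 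So both halves of the proposal rest on unproved steps, and the descent as formulated would fail even if attempted.
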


\begin{proof}
Let $\G$ be a $\cu$-curve, we write it as (see Section \ref{subsec:Notations-and-divisibility})
\[
\G=aL-\frac{1}{3}\sum_{j=1}^{9}\left((u_{j}+2v_{j})A_{j}+(2u_{j}+v_{j})B_{j}\right),
\]
with $u_{j}\geq0,\,v_{j}\geq0$ (we can apply inequality \eqref{eq:ineqUiVi}
since we suppose that $\G$ is irreducible) and $u_{j},v_{j}\in\ZZ$,
$a\in\ZZ$, so that 
\begin{equation}
\G^{2}=2ta^{2}-\frac{2}{3}\sum_{j=1}^{9}\left(u_{j}^{2}+u_{j}v_{j}+v_{j}^{2}\right)=-2.\label{eq:etik1}
\end{equation}
Suppose that $\G L'\leq0$. This is equivalent to: 
\[
\left(aL-\frac{1}{3}\left((u_{1}+2v_{1})A_{1}+(2u_{1}+v_{1})B_{1}\right)\right)\left(x_{0}L-2ty_{0}(A_{1}+2B_{1})\right)\leq0
\]
which is equivalent to 
\[
ax_{0}\leq(2u_{1}+v_{1})y_{0}.
\]
By taking the square (recall that $x_{0}>0,y_{0}>0$ and $a\geq0$),
we get 
\[
x_{0}^{2}a^{2}\leq\left(4(u_{1}^{2}+u_{1}v_{1}+v_{1}^{2})-3v_{1}^{2}\right)y_{0}^{2},
\]
which is equivalent to 
\[
-4(u_{1}^{2}+u_{1}v_{1}+v_{1}^{2})y_{0}^{2}\leq-x_{0}^{2}a^{2}-3v_{1}^{2}y_{0}^{2},
\]
thus 
\[
-\frac{2}{3}(u_{1}^{2}+u_{1}v_{1}+v_{1}^{2})\leq-\frac{1}{6y_{0}^{2}}(x_{0}^{2}a^{2}+3v_{1}^{2}y_{0}^{2})
\]
and using equality $\G^{2}=-2$ in \eqref{eq:etik1}, we get 
\[
-2\leq2ta^{2}-\frac{1}{6y_{0}^{2}}(x_{0}^{2}a^{2}+3v_{1}^{2}y_{0}^{2})-\frac{2}{3}S,
\]
here and hereafter, we denote 
\[
S=\sum_{j=2}^{9}\left(u_{j}^{2}+u_{j}v_{j}+v_{j}^{2}\right)\geq0.
\]
Thus we obtain 
\[
\tfrac{1}{2}v_{1}^{2}+\tfrac{1}{6}a^{2}(\frac{x_{0}^{2}}{y_{0}^{2}}-12t)+\tfrac{2}{3}S\leq2,
\]
which is equivalent to 
\begin{equation}
\tfrac{1}{2}v_{1}^{2}+\tfrac{1}{6}\frac{a^{2}}{y_{0}^{2}}+\tfrac{2}{3}S\leq2.\label{eq:importante-pour-L-nef}
\end{equation}
If $a=0$, then $\G=A_{1}$ or $A_{j},B_{j}$ with $j\geq2$. We therefore
suppose that $a\neq0$, and then $a>0$ since $\G$ is effective.
Let us suppose that one of the coefficients $\a_{j}=\frac{1}{3}(u_{j}+2v_{j}),\,\b_{j}=\frac{1}{3}(2u_{j}+v_{j})$
of $\G$ is in $\frac{1}{3}\ZZ\setminus\ZZ$. Then by Corollary \ref{cor:Structure-of-NS},
at least $12$ of the coefficients $\a_{j},\b_{j}$ are non-zero,
and therefore at least $5$ of the $u_{j}$ or $v_{j}$ (which are
$\geq0$) with the condition $j\geq2$ are non-zero. That implies
that $S\geq5$, thus $\frac{2}{3}S\geq\frac{10}{3}>2$, which is a
contradiction. So all the coefficients of $\G$ are integers. Suppose
that $S>0$. Since $S<3$, there exist one or two indices $j\geq2$
such that $u_{j}$ or $v_{j}$ is equal to $1$ (and the other coefficients
with index $k\geq2$ are $0$). But then the coefficients of $\G$
are not integral: the only possibility is $S=0$. Since $a\neq0$,
we have $v_{1}\in\{0,1\}.$ Since $\G^{2}=-2$, the integers $a,u_{1},v_{1},t$
satisfies 
\[
2ta^{2}-\frac{2}{3}\left(u_{1}^{2}+u_{1}v_{1}+v_{1}^{2}\right)=-2,
\]
which is equivalent to 
\[
u_{1}^{2}+u_{1}v_{1}+v_{1}^{2}=3(ta^{2}+1).
\]
If $v_{1}=0$, since $t=1\mod3$, $ta^{2}+1=1$ or $2\mod3$. But
then $3(ta^{2}+1)$ is not a square, and there is no such a solution.
Therefore $v_{1}=1$, and from inequality \eqref{eq:importante-pour-L-nef},
the integer $a$ is in the range 
\[
1\leq a\leq3y_{0}.
\]
The equality 
\[
u_{1}^{2}+u_{1}+1=3(ta^{2}+1),
\]
is equivalent to 
\[
(2u_{1}+1)^{2}+3=12(ta^{2}+1),
\]
thus to 
\[
(2u_{1}+1)^{2}=3(4ta^{2}+3).
\]
Then $2u_{1}+1$ must be divisible by $3$: let $w$ be such that
$3w=2u_{1}+1$. The above equation is equivalent to 
\[
3w^{2}=4ta^{2}+3,
\]
which in turn, since $t=1\text{ mod 3}$, implies that there exists
an integer $A$ such that $a=3A$, then the equation is equivalent
to the Pell-Fermat equation 
\[
w^{2}-12tA^{2}=1.
\]
Let $w_{0},A_{0}$ be a solution of that Pell-Fermat equation, then
$a=3A_{0},u_{1}=\frac{1}{2}(3w_{0}-1)$, $v_{1}=1$ are such that
$\G^{2}=-2$. Since $a\leq3y_{0}$, and we now use that $(x_{0},y_{0})$
is the fundamental solution of the Pell-Fermat equation to conclude
that $a=3y_{0}$ and $u_{1}=\frac{1}{2}(3x_{0}-1),$ for which integers
one has $\Gamma=B_{1}'$, and $L'B_{1}'=0$. That concludes the proof. 
\end{proof}
We obtain: 
\begin{cor}
\label{cor:B1'-Is-(-2)-curve}The divisor $B_{1}'=3y_{0}L-(\frac{1}{2}(x_{0}+1)A_{1}+x_{0}B_{1})$
is the class of a $\cu$-curve.\\
 The curves $B_{1}$ and $B_{1}'$ are the unique $\cu$-curves in
the lattice generated by $L,A_{1},B_{1}$ which have intersection
$1$ with $A_{1}$. 
\end{cor}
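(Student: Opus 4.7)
The plan is to prove the corollary in three stages: first, that $B_1'$ is effective; second, that it is irreducible; third, that $B_1$ and $B_1'$ are the only irreducible $\cu$-curves in $\mathbb{L} = \langle L, A_1, B_1\rangle$ with intersection $1$ with $A_1$.

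For effectivity, since $L$ is big and nef with $L \cdot B_1' = 3 y_0 L^2 > 0$, the class $-B_1'$ cannot be effective; Riemann--Roch applied to the $\cu$-class $B_1'$ then forces $h^0(B_1') \geq 1$.

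For irreducibility, I will decompose $B_1' = \sum_i n_i C_i$ into irreducible components. Since $L' \cdot B_1' = 0$ by construction and $L'$ is nef by Proposition~\ref{prop:L-is-Big-and-Nef}, each $L' \cdot C_i = 0$; the same proposition then restricts $C_i$ to the list $\{A_1, B_1', A_2, \dots, B_9\}$. The old curve $B_1$ is not in this list since $L' \cdot B_1 = 6 t y_0 \neq 0$, and the $A_j, B_j$ for $j \geq 2$ are linearly independent modulo $\mathbb{L}$, so the only possible components of $B_1' \in \mathbb{L}$ are $A_1$ and $B_1'$ itself. A short intersection calculation with $A_1$ and $B_1'$ then forces the decomposition to be trivial.

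For uniqueness, an irreducible $\cu$-curve $\Gamma$ in $\mathbb{L}$ with $\Gamma \cdot A_1 = 1$ either lies in $\{A_j, B_j\}$ (where only $B_1$ has the required intersection with $A_1$) or, by the analysis of Section~\ref{subsc:B1-Irred-Cas-t=00003D00003D00003D1mod3} together with the positivity conditions $u_1, v_1 \geq 0$ of Section~\ref{subsec:Notations-and-divisibility}, corresponds to a positive solution $(x_n, y_n)$ of the Pell--Fermat equation $x^2 - 12 t y^2 = 1$ with $n \geq 1$ (the condition $u_1 \geq 0$ excludes the negative-$x$ solutions). The key step will be to compute $\Gamma_n \cdot B_1'$; using the recursion $x_0 x_n - 12 t y_0 y_n = x_{n-1}$ coming from the group law on Pell--Fermat solutions, I obtain
\[
\Gamma_n \cdot B_1' = -\tfrac{3 x_{n-1} + 1}{2}.
\]
For $n = 1$ this equals $-2$, consistent with $B_1' \cdot B_1' = -2$ and forcing $\Gamma = B_1'$; for $n \geq 2$, since $x_{n-1} \geq x_0 \geq 1$, the intersection is strictly less than $-2$, which contradicts the non-negativity of the intersection between two distinct irreducible $\cu$-curves. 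The principal obstacle is controlling the infinitely many Pell--Fermat solutions, and the recursion above is the crucial algebraic input: it shows that the intersection with $B_1'$ becomes more and more negative as $n$ grows, so the fundamental solution is the only one producing a genuine curve.
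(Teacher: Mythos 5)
Your proposal is correct, and for the uniqueness statement it follows essentially the same route as the paper: parametrize the candidate classes by Pell--Fermat solutions and show that all non-fundamental solutions have negative intersection with $B_1'$. Your closed formula $\Gamma_n\cdot B_1'=-\tfrac{3x_{n-1}+1}{2}$, obtained from the norm-form identity $x_nx_0-12ty_ny_0=x_{n-1}$, is in fact a nice improvement on the paper, which only asserts the inequality $xx_0+\tfrac13>12tyy_0$ ``by induction on $k$'' without writing it out; your identity makes the sign, and the exceptional value $-2$ at $n=1$, completely transparent.

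Where you genuinely diverge is in the proof of irreducibility. The paper argues geometrically: a multiple of the big and nef class $L'$ contracts exactly the curves orthogonal to it to ADE points, and the Picard number $19$ hypothesis forces the singular point supported on $A_1$ and the effective class $B_1'$ to be of type ${\bf A}_2$, whence $B_1'$ is irreducible. You instead argue lattice-theoretically: decompose the effective class $B_1'$ into irreducible components, note each component $C$ satisfies $L'C=0$ by nefness, hence (by the Hodge index theorem, since $L'^2>0$ --- a step you should state explicitly) is a $\cu$-curve, hence by Proposition \ref{prop:L-is-Big-and-Nef} has class in $\{A_1,B_1',A_2,\dots,B_9\}$, and then use linear independence of $A_2,\dots,B_9$ modulo $\langle L,A_1,B_1\rangle$ and comparison of the $L$-coefficient to kill all components except a single copy of $B_1'$. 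Both arguments rest on the same key input (Proposition \ref{prop:L-is-Big-and-Nef}); yours is more elementary and self-contained, avoiding the appeal to the structure of the contraction morphism, while the paper's is shorter once that geometric machinery is granted. Your Riemann--Roch argument for effectivity is also correct and fills in a step the paper leaves implicit.
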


\begin{proof}
A suitable multiple of the big and nef divisor $L'$ defines a map
which is is generically one to one onto a projective model. It contracts
the irreducible curves subjacent to $B_{1}'$ and the curves $A_{1},A_{2},B_{2},\dots,A_{9},B_{9}$
to ADE singularities. By the genericity assumption on the K3 surface
$X$, the surface has Picard number $19$, which forces the image
of $A_{1}+B_{1}'$ to be a ${\bf A}_{2}$-singularity, therefore the
curve $B_{1}'$ is irreducible.

Let us prove the unicity claim. By Section \ref{subsec:-classes with intersection-1-with-A_=00003D00003D00007B1=00003D00003D00007D},
if $\tilde{B}_{1}$ is the class of a $\cu$-curve such that $\tilde{B}_{1}A_{1}=1$,
there exists a solution $(x,y)$ of the Pell-Fermat equation \eqref{eq:Pell-Fermat-12t}
such that 
\[
\tilde{B}_{1}=3yL-(\frac{1}{2}(x+1)A_{1}+xB_{1}).
\]
Suppose $\tilde{B}_{1}\neq B_{1}$ ie $(x,y)\neq(-1,0)$. Then since
$\tilde{B}_{1}$ is effective, one has $x>0,y>0$ and there exists
$k\in\NN^{*}$ such that $x=x_{k},\,\,y=y_{k}$ for 
\[
x_{k}+\sqrt{12t}y_{k}=(x_{0}+\sqrt{12t}y_{0})^{k}=(x_{0}+\sqrt{12t}y_{0})(x_{k-1}+\sqrt{12t}y_{k-1}),
\]
where $(x_{0},y_{0})$ is the fundamental solution of the Pell-Fermat
equation \eqref{eq:Pell-Fermat-12t} (see e.g. \cite{Barbeau}, Section
4.2). One has 
\[
\begin{array}{cc}
\tilde{B}_{1}B_{1}' & =(3yL-(\frac{1}{2}(x+1)A_{1}+xB_{1}))(3y_{0}L-(\frac{1}{2}(x_{0}+1)A_{1}+x_{0}B_{1}))\\
 & =18yy_{0}t-\frac{3}{2}xx_{0}-\frac{1}{2}
\end{array},
\]
therefore $\tilde{B}_{1}B_{1}'<0$ if and only if $xx_{0}+\frac{1}{3}>12yy_{0}$.
Using an induction on $k$, one can check that this is the case for
all $k\geq1$. Thus if $k>1$, the $\cu$-class $\tilde{B}_{1}$ cannot
be the class of an irreducible curve. We observe moreover that if
$k=1$, then $\tilde{B}_{1}=B_{1}'$, and that concludes the proof. 
\end{proof}

\subsubsection{\label{subsc:B1-Irred-t=00003D00003D00003D0mod3}Case $t=0\protect\mod3$}

Suppose that $L^{2}=2t=6k$. Let $(x_{0},y_{0})$ be the fundamental
solution of the Pell-Fermat equation \eqref{eq:Pell-Fermat-4k}. Let
us search when the $\cu$-class 
\[
B_{1}'=y_{0}L-(\tfrac{1}{2}(x_{0}+1)A_{1}+x_{0}B_{1}),
\]
is the class of a $\cu$-curve. Recall that 
\[
L'=x_{0}L-2ky_{0}(A_{1}+2B_{1}),
\]
generates in $\NS X)$ the orthogonal complement of $A_{1},B_{1}',A_{2},B_{2},\dots,A_{9},B_{9}$. 
\begin{prop}
\label{prop:L-is-Big-and-Nef-4k}a) Suppose that $L^{2}=6\text{ or }12\mod18$
or $L^{2}=0\mod18$ and $3|y_{0}$. Then the class $L'$ is nef. Moreover
suppose that $\G$ is the class of an irreducible $(-2)$-curve such
that $L'\G=0$, then $\G\in\{A_{1},B_{1}',A_{2},\dots,A_{9},B_{9}\}$.
\\
b) Suppose that $L^{2}=0\mod18$ and $3\not|y_{0}$. Up to exchanging
the curves $A_{1}$ and $B_{1}$, the same result holds true. 
\end{prop}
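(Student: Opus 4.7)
The plan is to adapt the proof of Proposition \ref{prop:L-is-Big-and-Nef} to the case $L^{2}=6k$, where classes of $\NS X)$ may have coefficient of $L$ in $\tfrac{1}{3}\ZZ\setminus\ZZ$. Writing an irreducible $(-2)$-curve $\Gamma$ in the standard form
\[
\Gamma = aL - \frac{1}{3}\sum_{j=1}^{9}\bigl((u_j+2v_j)A_j + (2u_j+v_j)B_j\bigr),
\]
with $u_j,v_j\ge 0$ integers and $a\in\tfrac{1}{3}\NN$, and using that $L'\cdot A_1=L'\cdot A_j=L'\cdot B_j=0$ for $j\ge 2$ while $L'\cdot B_1=6ky_0$, the condition $L'\cdot\Gamma\le 0$ becomes $3ax_0\le y_0(2u_1+v_1)$. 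Squaring and combining the Pell--Fermat relation $x_0^2-4ky_0^2=1$ with $\Gamma^2=-2$ gives the key bound
\[
\tfrac{1}{2}v_1^2 + \tfrac{3a^2}{2y_0^2} + \tfrac{2}{3}S \le 2,\qquad S:=\sum_{j\ge 2}(u_j^2+u_jv_j+v_j^2).
\]

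When $a\in\ZZ$, Corollary \ref{cor:Structure-of-NS}(2) either shows all coefficients of $\Gamma$ are integers or forces at least twelve fractional coefficients, which makes $S\ge 5$ and contradicts the bound. In the integer case, $u_j\equiv v_j\pmod 3$ for every $j$ and hence $S\equiv 0\pmod 3$, so the bound forces $S=0$; a short divisibility analysis of $\Gamma^2=-2$ modulo $9$ then excludes $v_1=0$, and substituting $u_1=3m+1$ reduces the equation to the Pell--Fermat relation $(2m+1)^2-4ka^2=1$. The bound $a\le 2y_0/\sqrt 3<2y_0$ together with the minimality of $(x_0,y_0)$ yields $a=y_0$, $u_1=(3x_0-1)/2$, i.e.\ $\Gamma=B_1'$.

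The main difficulty is the case $a\in\tfrac{1}{3}\ZZ\setminus\ZZ$, where Corollary \ref{cor:Structure-of-NS}(3) guarantees respectively at least three, four, five fractional indices according to whether $L^2\equiv 0,6,12\pmod{18}$. Using the explicit representatives $v_{6k}$ of Remark \ref{rem:The-classes-v6t}, whose supports avoid index $1$, I would verify that for $L^2\equiv 6$ or $12\pmod{18}$ all fractional indices lie entirely in $\{2,\dots,9\}$, giving $S\ge 4$ or $S\ge 5$ and contradicting the bound. When $L^2\equiv 0\pmod{18}$ the fractional support has size only three, so $S\ge 3$ is just barely compatible with the bound: this is the point where the hypothesis $3\mid y_0$ enters, through the Pell relation $x_0^2-4ky_0^2=1$ (which forces $\gcd(x_0,3)=1$), to rule out the residual would-be $(-2)$-classes by a congruence argument, completing part (a). For part (b), exchanging $A_1$ and $B_1$ replaces $L'$ by $\tilde L'=x_0L-2ky_0(2A_1+B_1)$, and the analogous argument with the roles of $u_1$ and $v_1$ swapped closes the proof without requiring the assumption $3\mid y_0$.
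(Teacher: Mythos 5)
Your setup, the derivation of the key inequality $\tfrac{1}{2}v_{1}^{2}+\tfrac{3a^{2}}{2y_{0}^{2}}+\tfrac{2}{3}S\leq2$, and your treatment of the integral case ($S\equiv0\bmod3$, hence $S=0$, exclusion of $v_{1}=0$, reduction to the Pell--Fermat equation and $\G=B_{1}'$) all match the paper's argument. The problems are concentrated in the case $a\in\tfrac{1}{3}\ZZ\setminus\ZZ$. First, your claim that for $L^{2}\equiv6$ or $12\bmod18$ the fractional support avoids index $1$ is false: the fractional part of a class of $\NS X)$ is only determined modulo $\calK_{3}$, and $t_{1}=\tfrac{1}{3}\sum_{i=1}^{9}(A_{i}-B_{i})\in\calK_{3}$ has full support, so adding $t_{1}$ to $(L+v_{6k})/3$ produces classes whose fractional support does contain index $1$. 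This is repairable: Corollary \ref{cor:Structure-of-NS}(3) gives at least $4$ (resp.\ $5$) fractional ${\bf A}_{2}$-blocks, of which at least $3$ (resp.\ $4$) have index $\geq2$, so $S\geq3$ (resp.\ $S\geq4$), contradicting $S<3$. Note also that for $L^{2}\equiv0\bmod18$ the correct surviving bound is $S=2$ (one of the three fractional blocks may be the first one), not ``$S\geq3$ just barely compatible'' as you write --- $S\geq3$ is already excluded.

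The genuine gap is that you do not actually close the case $L^{2}\equiv0\bmod18$, $S=2$, $a\in\tfrac{1}{3}\ZZ\setminus\ZZ$, which is the whole point of the hypotheses in (a) and (b). The mechanism is not a congruence on $x_{0}$: solving $\G^{2}=-2$ in this case and invoking minimality of $(x_{0},y_{0})$ forces the would-be orthogonal curve to be $\G_{0}=\tfrac{1}{3}y_{0}L-\tfrac{1}{3}(\tfrac{1}{2}(x_{0}-1)F_{1}+G_{1}+H+H')$ (for $v_{1}=1$; similarly with $L$-coefficient $\tfrac{2}{3}y_{0}$ for $v_{1}=0$). Its $L$-coefficient must lie in $\tfrac{1}{3}\ZZ\setminus\ZZ$, which is impossible precisely when $3\mid y_{0}$ --- that is how the hypothesis enters, and your proposed route via $\gcd(x_{0},3)=1$ (which holds unconditionally) does not rule these classes out. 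For part (b) you assert that swapping $A_{1}$ and $B_{1}$ ``closes the proof,'' but a priori the analogous obstructing class $\G_{0}'$ (with $F_{1}$ and $G_{1}$ exchanged) could also lie in $\NS X)$, so swapping gains nothing without further argument. The paper's proof shows that $\G_{0}$ and $\G_{0}'$ cannot both belong to $\NS X)$, by counting the coefficients of $\G_{0}-\G_{0}'$ lying in $\tfrac{1}{3}\ZZ\setminus\ZZ$ (between $2$ and $10$, with integral $L$-coefficient) and invoking Corollary \ref{cor:Structure-of-NS}(2); this comparison step is missing from your proposal and is essential for statement (b).
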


\begin{rem}
i) If $\Gamma$ is in $\{A_{1},B_{1}',A_{2},\dots,A_{9},B_{9}\}$,
one has $L'\G=0$. The classes $D=A_{j}+B_{j}$ (for $j\geq2$) or
$D=A_{1}+B_{1}'$ are also a $\cu$-classes such that $L'D=0$.\\
ii) The class $B_{1}'$ is constructed such that $A_{1}B_{1}'=1$.
Using the same solutions of the Pell-Fermat equation, we can also
construct a class $A_{1}'$ such that $A_{1}'B_{1}=1$, this is what
we mean in b), when we say that the result holds true up to exchanging
the role of the curves $A_{1}$ and $B_{1}$.
\end{rem}

\textit{Proof} (of Proposition \ref{prop:L-is-Big-and-Nef}). As we
will see the proof of Proposition \ref{prop:L-is-Big-and-Nef} is
easy for the cases $L^{2}=6,12\mod18$, the main difficulty is for
$L^{2}=0\mod18$. 

By definition, $L$ is nef if and only if for any $\cu$-curve $\G$,
one has $L\G\geq0$. Let therefore 
\[
\G=aL-\frac{1}{3}\sum_{j=1}^{9}\left((u_{j}+2v_{j})A_{j}+(2u_{j}+v_{j})B_{j}\right),
\]
be an effective $\cu$-class and let us suppose moreover that this
is the class of an irreducible $\cu$-curve not in $\{A_{1},\dots,B_{9}\}$.
Then by inequality \eqref{eq:ineqUiVi}, necessarily one has $u_{j}\geq0,\,v_{j}\geq0$,
$u_{j},v_{j}\in\ZZ$, and $a>0$, in $\frac{1}{3}\ZZ$ (since $t=0\mod3$).
As in Section \ref{subsc:B1-Irred-Cas-t=00003D00003D00003D1mod3},
let us study if $L'$ is nef. After computations similar to those
of Section \ref{subsc:B1-Irred-Cas-t=00003D00003D00003D1mod3}, we
obtain that $\G L'\leq0$ if and only if 
\begin{equation}
\frac{1}{2}v_{1}^{2}+\frac{3a^{2}}{2y_{0}^{2}}+\frac{2}{3}S\leq2,\label{eq:Important-K-L2eq6k}
\end{equation}
where $S=\sum_{j=2}^{9}\left(u_{j}^{2}+u_{j}v_{j}+v_{j}^{2}\right)\in\NN.$
We observe that $S<3$ since $a\neq0$. 

\textbf{Case 1: one of the coefficients of $\G$ is in $\tfrac{1}{3}\ZZ\setminus\ZZ$.}
Suppose that one of the coefficients of $\G$ is in $\frac{1}{3}\ZZ\setminus\ZZ$.
By Corollary \ref{cor:Structure-of-NS}, there are at least $6,8$
or $10$ coefficients $\a_{j}=\frac{1}{3}(u_{j}+2v_{j}),\b_{j}=\frac{1}{3}(2u_{j}+v_{j})$
that are in $\frac{1}{3}\ZZ\setminus\ZZ$ according if $L^{2}=0,\,6\text{ or }12\mod18$.
Thus respectively, at least $3,4$ or $5$ integers $u_{j},v_{j}$
are non-zero, and therefore the sum $S$ (which is over the indices
$j\geq2$) is larger or equal to $2,3$ or $4$ respectively. Since
$S<3$, Corollary \ref{cor:Structure-of-NS} implies that $L^{2}=0\mod18$,
$S=2$, moreover $a\in\frac{1}{3}\ZZ\setminus\ZZ$ (still by Corollary
\ref{cor:Structure-of-NS}) and Equation \eqref{eq:Important-K-L2eq6k}
is then equivalent to
\begin{equation}
\frac{1}{2}v_{1}^{2}+\frac{3a^{2}}{2y_{0}^{2}}\leq\frac{2}{3}.\label{eq:Import2}
\end{equation}
\textbf{Sub-case 1) a) $v_{1}=1$ (and }$L^{2}=0\mod18$\textbf{).
}Let us suppose that $v_{1}=1$ and define $a'\in\ZZ$ such that $a=\frac{a'}{3}$
(since $a\in\frac{1}{3}\ZZ\setminus\ZZ$, $a'$ is coprime to $3$).
Inequality \eqref{eq:Import2} implies that $a'\leq y_{0}$. Since
$S=2$ and $v_{1}=1$, the $\cu$-class $\G$ has the form: 
\[
\G=\tfrac{1}{3}a'L-\tfrac{1}{3}(u_{1}F_{1}+G_{1}+H+H')
\]
with classes $H,H'$ in the set $\{F_{j},G_{j}\,|\,j\geq2\}$ and
such that $HH'=0$ (we recall that $F_{j}=A_{j}+2B_{j},$ $G_{j}=2A_{j}+B_{j}$).
Equality $\G^{2}=-2$ is equivalent to 
\[
\frac{2}{3}ka'^{2}-\frac{2}{3}(u_{1}^{2}+u_{1}+1)-\frac{2}{3}-\frac{2}{3}=-2,
\]
which is equivalent to 
\[
(u_{1}^{2}+u_{1}+1)-ka'^{2}=1,
\]
(observe here that since $L^{2}=0\mod18$, one has $k=0\mod3$, thus
$u_{1}^{2}+u_{1}=0\mod3$) and finally, to 
\[
(2u_{1}+1)^{2}-4ka'^{2}=1.
\]
Since $a'\leq y_{0}$ and $(x_{0},y_{0})$ is the fundamental solution
of the Pell-Fermat equation \eqref{eq:Pell-Fermat-4k}, we have $a'=y_{0}$,
(in other words $a=\frac{1}{3}y_{0}$) and $u_{1}=\frac{1}{2}(x_{0}-1)$.
We obtain that the class $\G$ is 
\begin{equation}
\G_{0}=\tfrac{1}{3}y_{0}L-\tfrac{1}{3}\left(\tfrac{1}{2}(x_{0}-1)F_{1}+G_{1}+H+H'\right).\label{eq:degGamma_0}
\end{equation}
By Corollary \ref{cor:Structure-of-NS}, this class can be in the
Néron-Severi group only if $y_{0}$ is coprime to $3$. Now, let us
suppose that $y_{0}$ is coprime to $3$ (recall that we are in the
case $L^{2}=0\mod18$). We will see that up to exchanging the role
of $A_{1}$ and $B_{1}$, one can also suppose that such a $\G_{0}$
is not in the Néron-Severi group.

If the pair $\{H,H'\}$ in the definition of $\G_{0}$ of equation
\ref{eq:degGamma_0} exists, it is unique, otherwise the difference
between the two obtained $(-2)$-classes $\G_{0}$ would have at least
$2$ and at most $8$ coefficients in $\tfrac{1}{3}\ZZ\setminus\ZZ$,
with the coefficient of $L$ in $\ZZ$, but this is impossible by
part 2) of Corollary \ref{cor:Structure-of-NS}.

For some classes $H_{1},H_{1}'\in\{F_{j},G_{j}\,|\,j\geq2\}$ with
$H_{1}H_{1}'=0$, let us consider the $\cu$-class 
\[
\G_{0}'=\tfrac{1}{3}y_{0}L-\tfrac{1}{3}\left(F_{1}+\tfrac{1}{2}(x_{0}-1)G_{1}+H_{1}+H'_{1}\right)
\]
The number of coefficients in $\tfrac{1}{3}\ZZ\setminus\ZZ$ of the
class
\[
\G_{0}-\G_{0}'=\tfrac{1}{3}((u_{1}-1)(G_{1}-F_{1})+H+H'-H_{1}-H_{1}')
\]
is at least $2$ (because $u_{1}\neq1\mod3$) and at most $10$ (because
each class $H,H',H_{1},H_{1}'$ can give at most two coefficients
in $\tfrac{1}{3}\ZZ\setminus\ZZ$). By part 2) of Corollary \ref{cor:Structure-of-NS},
this is impossible. We thus obtain that either $\G_{0}$ or $\G_{0}'$
is not in the Néron-Severi group. Thus up to exchanging $A_{1}$ and
$B_{1}$, and working with $B_{1}$ instead of $A_{1}$, one can suppose
that $\G_{0}$ is not in the Néron-Severi group. 
\begin{rem}
If $\G_{0}\in\NS X)$, then 
\[
B_{1}'=3\G_{0}+B_{1}+H+H',
\]
and the curve $B_{1}'$ is not irreducible. 
\end{rem}

\textbf{Sub-case 1) b) $v_{1}=0$ (and }$L^{2}=0\mod18$\textbf{).}
We are still in the case $L^{2}=0\mod18$, $S=2$, $a\in\tfrac{1}{3}\ZZ\setminus\ZZ$
and we suppose now that $v_{1}=0$. Equation \ref{eq:Import2} is
equivalent to $a'\leq2y_{0},$ where $a=\frac{a'}{3}$. The $\cu$-class
$\G$ has the form: 
\[
\G=\tfrac{1}{3}a'L-\tfrac{1}{3}(u_{1}F_{1}+H+H'),
\]
and equality $\G^{2}=-2$ implies that 
\begin{equation}
u_{1}^{2}-ka'^{2}=1.\label{eq:NewPF}
\end{equation}
Since $a'\leq2y_{0}$ and $(x_{0},y_{0})$ is a fundamental solution
of \ref{eq:Pell-Fermat-4k}, one has $a'=2y_{0},\,u_{1}=x_{0}$ and
$\G$ has the form 
\[
\Gamma=\frac{2}{3}y_{0}L-\frac{1}{3}\left(x_{0}(A_{1}+2B_{1})+H+H'\right),
\]
with $H,H'$ in $\{F_{j},G_{j}\,|\,j\geq2\}$ such that $HH'=0$.
Since cases $v_{1}=0$ and $v_{1}=1$ are disjoint, again, by exchanging
the role of $A_{1}$ and $B_{1}$ and by considering 
\[
\Gamma'=\frac{2}{3}y_{0}L-\frac{1}{3}\left(x_{0}(2A_{1}+B_{1})+H_{1}+H_{1}'\right),
\]
and the difference $\G-\G'$, we see that $\G$ or $\G'$ is not in
the Néron-Severi lattice. 

\vspace*{1mm}

\textbf{Case 2) All the coefficients are integers.} From the previous
discussion, we can suppose that $a$ is not in $\frac{1}{3}\ZZ\setminus\ZZ$
(here $L^{2}=0\mod6$). Then by Corollary \ref{cor:Structure-of-NS}
all the coefficients of $\G$ are integers, which implies that $S$
is divisible by $3$, and since $S<3$, we obtain that $S=0$, moreover,
using Equation \ref{eq:Important-K-L2eq6k}, one has $v_{1}\in\{0,1\}$.
We thus obtain that 
\[
\G=aL-\frac{1}{3}(u_{1}F_{1}+v_{1}G_{1})
\]
with 
\begin{equation}
6ka^{2}-\frac{2}{3}(u_{1}^{2}+u_{1}v_{1}+v_{1}^{2})=-2,\label{eq:Truc1}
\end{equation}
where we recall that $L^{2}=6k$. 

\textbf{Sub-case 2) a) }Suppose $v_{1}=0$, then Equation \eqref{eq:Truc1}
is equivalent to $u_{1}^{2}=3(1+3ka^{2})$, which has no solutions. 

\textbf{Sub-case 2) b) }Suppose that $v_{1}=1$. By defining $U=\frac{1}{3}(2u_{1}+1)$,
Equation \eqref{eq:Truc1} is equivalent to 
\[
U^{2}-4ka^{2}=1.
\]
Since by \eqref{eq:Important-K-L2eq6k}, $a\leq y_{0}$ and $(x_{0},y_{0})$
is the fundamental solution of the above Pell-Fermat equation, we
obtain that $a=y_{0}$, $u_{1}=\frac{1}{2}(3x_{0}-1)$ and 
\[
\G=y_{0}L-(\tfrac{1}{2}(x_{0}+1)A_{1}+x_{0}B_{1})=B_{1}'.
\]
That finishes the proof of Proposition \ref{prop:L-is-Big-and-Nef-4k}. 

\vspace*{1mm}

As for the case $L^{2}=2\mod6$, we get : 
\begin{cor}
\label{cor:B1'-Is-(-2)-case-4k}Suppose that $(x_{0},y_{0})$ is the
fundamental solution of the Pell-Fermat equation \ref{eq:Pell-Fermat-4k}
and that $L^{2}=6\text{ or }12\mod18$ or $L^{2}=0\mod18$ and $3|y_{0}$.
The divisor $B_{1}'=y_{0}L-(\tfrac{1}{2}(x_{0}+1)A_{1}+x_{0}B_{1})$
is the class of a $\cu$-curve. \\
 The curves $B_{1}$ and $B_{1}'$ are the unique $\cu$-curves in
the lattice generated by $L,A_{1},B_{1}$ which have intersection
one with $A_{1}$. \\
 Up to exchanging the role of the curves $A_{1},B_{1}$, the same
result holds true for $L^{2}=0\mod18$ and $3\not|y_{0}$. 
\end{cor}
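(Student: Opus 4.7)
The plan is to follow, step for step, the argument that established Corollary \ref{cor:B1'-Is-(-2)-curve} in the case $L^{2}\equiv 2\mod 6$, now using the Pell--Fermat equation \eqref{eq:Pell-Fermat-4k} in place of \eqref{eq:Pell-Fermat-12t}. Most of the work has already been packaged into Proposition \ref{prop:L-is-Big-and-Nef-4k}; what remains is to extract the irreducibility of $B_{1}'$ and to rule out any competing $\cu$-class in $\langle L,A_{1},B_{1}\rangle$ meeting $A_{1}$ with multiplicity one.

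For the irreducibility, I would argue as follows. Under the hypothesis of the corollary (after exchanging $A_{1}$ with $B_{1}$ when $L^{2}\equiv 0\mod 18$ and $3\nmid y_{0}$), Proposition \ref{prop:L-is-Big-and-Nef-4k} shows that $L'=x_{0}L-2ky_{0}(A_{1}+2B_{1})$ is big and nef and that every irreducible $\cu$-curve orthogonal to $L'$ belongs to $\{A_{1},B_{1}',A_{2},B_{2},\dots,A_{9},B_{9}\}$. A sufficiently large multiple of $L'$ therefore defines a birational morphism onto a projective model with only ADE singularities, contracting precisely these eighteen curves. Since $\rho(X)=19$, the contracted components must arrange themselves into nine singular points of total Dynkin rank $18$; the eight pairs $(A_{j},B_{j})$ for $j\ge 2$ already provide eight $\mathbf{A}_{2}$-components, so $A_{1}$ together with (the irreducible representative of) $B_{1}'$ is forced to form the ninth $\mathbf{A}_{2}$. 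In particular, $B_{1}'$ is represented by an irreducible $\cu$-curve.

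For uniqueness, let $\tilde{B}_{1}\neq B_{1}$ be any other $\cu$-class in $\langle L,A_{1},B_{1}\rangle$ of an irreducible curve with $\tilde{B}_{1}\cdot A_{1}=1$. The derivation of Section \ref{subsec:Constr-B-L-form-teq0mod3} forces $\tilde{B}_{1}=yL-(\tfrac{1}{2}(x+1)A_{1}+xB_{1})$ with $(x,y)=(x_{n},y_{n})$ a positive solution of $x^{2}-4ky^{2}=1$, namely the iterates of the fundamental solution $(x_{0},y_{0})$ under the Pell recurrence $x_{n+1}=x_{0}x_{n}+4ky_{0}y_{n}$, $y_{n+1}=x_{0}y_{n}+y_{0}x_{n}$. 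A direct computation gives
\[
\tilde{B}_{1}\cdot B_{1}' \;=\; 6k\,y_{n}y_{0}\;-\;\tfrac{3}{2}x_{n}x_{0}\;-\;\tfrac{1}{2},
\]
and I would verify by induction on $n\ge 2$, using the recurrence together with $x_{n}^{2}-4ky_{n}^{2}=1$, that this intersection is strictly negative. Since two distinct irreducible $\cu$-curves must have non-negative intersection, this rules out $n\ge 2$, so $\tilde{B}_{1}=B_{1}'$. The remaining case $L^{2}\equiv 0\mod 18$ with $3\nmid y_{0}$ follows from the same argument with $A_{1}$ and $B_{1}$ exchanged, as already accommodated by Proposition \ref{prop:L-is-Big-and-Nef-4k}.

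The step I expect to demand the most care is the induction establishing $\tilde{B}_{1}\cdot B_{1}'<0$: although $x_{n}$ and $y_{n}$ both grow like $(x_{0}+2\sqrt{k}y_{0})^{n}$, one has to control the gap between $4ky_{n}y_{0}$ and $x_{n}x_{0}+\tfrac{1}{3}$ uniformly in the parameter $k$. In the $L^{2}\equiv 2\mod 6$ setting this was dispatched by a clean monotone induction, and I expect the same template to succeed here after only mild adjustments to the numerical constants.
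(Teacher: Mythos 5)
Your proposal is correct and follows essentially the same route as the paper: irreducibility of $B_{1}'$ via the projective model defined by the big and nef class $L'$ together with the Picard number $19$ constraint, and uniqueness via the Pell parametrization of $\cu$-classes meeting $A_{1}$ once and the negativity of $\tilde{B}_{1}\cdot B_{1}'$ for non-fundamental solutions. The only remark is that the inductive step you flag as delicate is in fact immediate, since the recurrence gives $x_{n}x_{0}-4ky_{n}y_{0}=x_{n-1}\geq 1>-\tfrac{1}{3}$ directly.
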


\begin{proof}
For proving that $B_{1}'$ is a $\cu$-curve we use the same argument
as in the proof of Corollary \ref{cor:B1'-Is-(-2)-curve}. Let $\tilde{B}_{1}$
be a $\cu$-curve such that $\tilde{B}_{1}A_{1}=1$. Suppose $\tilde{B}_{1}\neq B_{1}$,
there exists $(x,y)$ with $x>0,y>0$ solution of the Pell-Fermat
equation \ref{eq:Pell-Fermat-4k} such that 
\[
\tilde{B}_{1}=yL-\left(\frac{1}{2}(x+1)A_{1}+xB_{1}\right).
\]
One has $\tilde{B}_{1}B_{1}'<0$ if and only if 
\[
4kyy_{0}<xx_{0}+\frac{1}{3}.
\]
We proceed as in the proof of Corollary \ref{cor:B1'-Is-(-2)-curve}
and obtain that this inequality holds for any such $(x,y$), therefore
$(x,y)=(x_{0},y_{0})$. 
\end{proof}

\section{Existence of two generalized Kummer structures}

\subsection{A theoretical approach\label{subsec:A-theoretical-approach}}

Let $X$ be a generalized Kummer surface, we keep the notations as
before, in particular the polarization $L$ generates the orthogonal
complement to the $18$ curves $A_{1},\dots,B_{9}$. Through this
section we suppose that $6L^{2}$ is not a square (when $L^{2}=2\mod6$,
that assumption is always satisfied). For $L^{2}=2\mod6$ (respectively
for $L^{2}=0\mod6$), let $(x_{0},y_{0})$ be the fundamental solution
of the Pell-Fermat equation 
\[
x^{2}-12ty^{2}=1,
\]
for $t$ such that $L^{2}=2t$ (respectively 
\[
x^{2}-4ky^{2}=1,
\]
for $k$ such that $L^{2}=6k$). We remark that $x_{0}^{2}=1\mod12t$
(respectively $x_{0}^{2}=1\mod4k$). Let $B_{1}'$ be the $\cu$-class
\[
\begin{array}{l}
B_{1}'=3y_{0}L-(\frac{1}{2}(x_{0}+1)A_{1}+x_{0}B_{1}),\end{array}
\]
(respectively 
\[
B_{1}'=y_{0}L-(\frac{1}{2}(x_{0}+1)A_{1}+x_{0}B_{1}))\,\text{).}
\]
Let us suppose that $B_{1}'$ is the class of a $\cu$-curve. This
is guarantied by Corollaries \ref{cor:B1'-Is-(-2)-curve} and \ref{cor:B1'-Is-(-2)-case-4k}
if $L^{2}\neq0\mod18$, or if $L^{2}=0\mod18$ and $3|y_{0}$, or
up to exchanging the role of $A_{1}$ and $B_{1}$ if $L^{2}=0\mod18$
and $3\not|y_{0}$. Then, we know two generalized Nikulin configurations
\[
\mathcal{C}=\{A_{1},B_{1},\dots,A_{9},B_{9}\},\,\mathcal{C}'=\{A_{1},B_{1}',A_{2},B_{2},\dots,A_{9},B_{9}\}.
\]
Let us prove the following: 
\begin{thm}
\label{thm:MainProved}Suppose that $x_{0}\neq\pm1\mod2t$ (respectively
$x_{0}\neq\pm1\mod2k$). There is no automorphism sending the configuration
$\mathcal{C}$ to the configuration $\mathcal{C}'$. As a consequence,
there are (at least) two generalized Kummer structures on the generalized
Kummer surface $X$. 
\end{thm}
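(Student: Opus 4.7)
The plan is to suppose by contradiction that there is an automorphism $\phi\in\aut(X)$ with $\phi(\mathcal{C})=\mathcal{C}'$, and to derive the forbidden congruence $x_{0}\equiv\pm1\mod 2t$ (respectively $\mod 2k$). First, since $L$ (respectively $L'$) is the unique primitive big-and-nef generator of the rank-one sublattice of $\NS X)$ orthogonal to the $18$ curves of $\mathcal{C}$ (respectively $\mathcal{C}'$), and $\phi_{*}$ preserves effectivity and the nef cone, one immediately obtains $\phi_{*}(L)=L'$.

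The heart of the argument is the evaluation of $\phi_{*}$ on the discriminant group $\NS X)^{\vee}/\NS X)$. By Theorem \ref{thm:StrucureOfNeronSeveri}, when $L^{2}\equiv 2\mod 6$ one has $\NS X)=\ZZ L\oplus\calK_{3}$, whose discriminant group is $\ZZ/2t\ZZ\oplus(\ZZ/3\ZZ)^{3}$ with the cyclic summand generated by the class $L^{\star}:=[L/(2t)]$. When $L^{2}=6k$ with $L^{2}\equiv 6\text{ or }12\mod 18$, so that $\gcd(k,3)=1$, a direct verification against the overlattice $\calK_{6k}'$ and its extension element $(L+v_{6k})/3$ shows that $[L/(2k)]$ lies in $\NS X)^{\vee}$ and has order exactly $2k$, generating a cyclic direct summand. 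Plugging in the explicit formula $L'=x_{0}L-2ty_{0}(A_{1}+2B_{1})$ (respectively $L'=x_{0}L-2ky_{0}(A_{1}+2B_{1})$) and using that the correction $-y_{0}(A_{1}+2B_{1})$ already lies in $\NS X)$, one computes
\[
\phi_{*}(L^{\star})=[L'/(2t)]=x_{0}\cdot L^{\star}\quad\text{in }\NS X)^{\vee}/\NS X),
\]
and identically $\phi_{*}([L/(2k)])=x_{0}\cdot[L/(2k)]$ in the second case. Crucially, this computation depends only on the identity $\phi_{*}(L)=L'$, and is therefore insensitive to the particular permutation and possible flipping of the nine $\mathbf{A}_{2}$-configurations effected by $\phi$.

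To conclude, we invoke the global Torelli theorem. The automorphism $\phi$ induces a Hodge isometry of $H^{2}(X,\ZZ)$, so by Nikulin's gluing theory the actions of $\phi_{*}$ on $\NS X)^{\vee}/\NS X)$ and on $T(X)^{\vee}/T(X)$ correspond under the canonical identification of these discriminant groups. Under the genericity hypothesis $\rho(X)=19$ imposed throughout the paper, the rank-$3$ transcendental lattice $T(X)$ has its group of Hodge isometries preserving $\CC\omega_{X}$ reduced to $\{\pm\mathrm{id}\}$; hence the induced action on $\NS X)^{\vee}/\NS X)$ is multiplication by $\pm1$. Combined with the previous step this forces $x_{0}\equiv\pm1\mod 2t$ (respectively $\mod 2k$), contradicting the hypothesis. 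Consequently $\mathcal{C}$ and $\mathcal{C}'$ lie in distinct $\aut(X)$-orbits of $9\mathbf{A}_{2}$-configurations, and by the correspondence of \cite{KRS} they determine at least two distinct generalized Kummer structures on $X$.

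The main obstacle is justifying the last assertion that the group of Hodge isometries of the rank-$3$ lattice $T(X)$ is $\{\pm\mathrm{id}\}$: this can fail for sporadic K3 surfaces whose transcendental lattice admits CM-type extra symmetries, but is ensured by the ``$X$ generic projective'' assumption, under which $T(X)\cong T(A)(3)$ (resp.\ its analogue) inherits the genericity of the Hodge structure on $T(A)$.
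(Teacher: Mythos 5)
Your proposal is correct and follows essentially the same route as the paper: send $L$ to $L'$, evaluate the isometry on the class $L/(2t)$ (resp. $L/(2k)$) in the discriminant group, and rule it out because an automorphism must act there by $\pm\mathrm{id}$ (the paper simply cites \cite[Section 8.1]{Shimada} for this last fact, where you supply the Torelli/transcendental-lattice justification). One small remark: no genericity is needed for the $\pm\mathrm{id}$ claim, since the group of Hodge isometries of $T(X)$ is finite cyclic of order $m$ with $\phi(m)$ dividing $\mathrm{rk}\,T(X)=3$, which already forces $m\leq 2$.
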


\begin{proof}
Let us suppose that such an automorphism $g$ sending $\mathcal{C}$
to $\mathcal{C}'$ exists. The automorphism $g$ induces an isometry
on $\NS X)$, it therefore sends the orthogonal complement of $\mathcal{C}$
to the orthogonal complement of $\mathcal{C}'$. Since $L,\,L'$ are
the positive generators of these complements, it maps $L$ to $L'$.
Suppose that $L$ is such that $L^{2}=2\mod6$. We recall that $L'=x_{0}L-2ty_{0}(A_{1}+2B_{1})$
and that by Section \ref{subsec:Construction-of-theNS}, the Néron-Severi
lattice is 
\[
\NS X)=\ZZ L\oplus\mathcal{K}{}_{3},
\]
therefore $\frac{1}{2t}L$ is in the dual of $\NS X)$ (we recall
that $L^{2}=2t$). Since we know that $g(L)=L'$, the action of $g$
on the class of $\frac{1}{2t}L$ in the discriminant group is 
\[
g^{*}(\frac{1}{2t}L)=\frac{x_{0}}{2t}L-y_{0}(A_{1}+2B_{1})=\frac{x_{0}}{2t}L\in\NS X)^{\vee}/\NS X).
\]
However, the action of an automorphism on the discriminant group must
be $\pm$ identity (see e.g. \cite[Section 8.1]{Shimada}). Since
the hypothesis is that $x_{0}\neq\pm1\mod2t$, such a $g$ does not
exist.\\
 Suppose that $L^{2}=0\mod6$, i.e.\,$L^{2}=6k$ ($k\in\NN^{*}$),
then $L'=x_{0}L-2ky_{0}(A_{1}+2B_{1})$. The Néron-Severi lattice
is generated by the lattice $\ZZ L\oplus\calK_{3}$ (see section \ref{subsec:Construction-of-theNS})
and by a vector $\tfrac{1}{3}(L+v_{6k})$, $v_{6k}\in\calK_{3}$.
The class $\frac{1}{2k}L$ is thus in the dual lattice of $\NS X)$.
The action of $g$ on the class of $\frac{1}{2k}L$ in the discriminant
group is 
\[
g^{*}(\frac{1}{2k}L)=\frac{x_{0}}{2k}L-y_{0}(A_{1}+2B_{1})=\frac{x_{0}}{2k}L.
\]
Again, since we supposed that $x_{0}\neq\pm1\mod2k$, this is impossible. 
\end{proof}
\begin{rem}
Suppose for simplicity that $L^{2}=2\text{ mod \ensuremath{6}}$.
One could play again the same game: pick-up a ${\bf A}_{2}$ configuration
$C_{1},D_{1}$ in $\mathcal{C}'$, then Corollary \ref{cor:B1'-Is-(-2)-curve}
implies that $D_{1}'=3y_{0}L-(\frac{1}{2}(x_{0}+1)C_{1}+x_{0}D_{1})$
is irreducible and we obtain in that way a new $9{\bf A_{2}}$-configuration
$\mathcal{C}''$, with orthogonal complement $L''$. Again there is
no automorphism sending $\mathcal{C}'$ to $\mathcal{C}''$. However,
the coefficient on $L$ of $L''$ will be $x_{0}^{2}$, which is congruent
to $1$ modulo $2t$, therefore there could be an automorphism sending
$\mathcal{C}$ to $\mathcal{C}''$, and in fact, in all the tested
cases, computations show that this always happens. 
\end{rem}

\begin{example}
The first $L^{2}$ for which Theorem \ref{thm:MainProved} provides
two generalized Kummer structures on the generalized Kummer surface
are: 
\begin{equation}
20,44,68,84,92,104,110,116,120,126,132,140,164,168,176,188.\label{eq:ExamplesLess200}
\end{equation}
\end{example}

The following infinite series of examples was given to us by Olivier
Ramaré: 
\begin{example}
Let $k$ be an integer, and let $a=8+12k$. Then $t=6+17k+12k^{2}$
is such that $a^{2}+a=12t$. The pair $(2a+1,2)$ is the fundamental
solution of the Pell-Fermat equation $x^{2}-12ty^{2}=1.$ One can
check that moreover $2a+1\neq\pm1\mod2t$ and therefore we can apply
Theorem \ref{thm:MainProved} for such $t$'s. 
\end{example}

The next Section suggests that the criteria in Theorem \ref{thm:MainProved}
for having two generalized Kummer structures is quite sharp.

\subsection{A computational approach\label{subsec:A-computational-approach}}

Suppose that the polarization $L$ on the generalized Kummer surface
$X$ is such that $6L^{2}$ is not a square and $L^{2}\neq0\mod18$
so that the curve $B_{1}'$ obtained in Corollaries \ref{cor:B1'-Is-(-2)-curve}
and \ref{cor:B1'-Is-(-2)-case-4k} is irreducible. Then, we know two
generalized Nikulin configurations 
\[
\mathcal{C}=\{A_{1},B_{1},\dots,A_{9},B_{9}\},\,\mathcal{C}'=\{A_{1},B_{1}',A_{2},B_{2},\dots,A_{9},B_{9}\}.
\]
Let $L$ and $L'$ be the big and nef generators of the orthogonal
complements of $\mathcal{C}$ and $\mathcal{C}'$ respectively. Suppose
that there is an automorphism $g$ sending $\mathcal{C}$ to $\mathcal{C}'$.
Then it induces an isometry $g^{*}$ of the lattice $\NS X)$, in
particular it sends $L$ to $L'$. Using the Torelli Theorem for K3
surfaces, one can test all linear maps 
\[
\psi:\NS X)\otimes\QQ\to\NS X)\otimes\QQ
\]
which satisfies to the following conditions:\\
 i) it sends the $\QQ$-base $\{L\}\cup\mathcal{C}$ to the $\QQ$-base
$\{L'\}\cup\mathcal{C}'$ and sends ${\bf A}_{2}$-configurations
in $\mathcal{C}$ to ${\bf A}_{2}$-configurations in $\mathcal{C}'$,
\\
 ii) it preserves the Néron-Severi lattice\\
 iii) it acts on the discriminant group of $\NS X)$ by $\pm Id$,\\
 iv) it sends an ample class to an ample class. \\
 About that last point, we remark that it is always satisfied by such
a $\psi$. Indeed, the nef divisor $L'$ generates the orthogonal
complement of a $9{\bf A}_{2}$ configuration $\cC'$, thus by Section
\ref{subsec:Polarizations}, for $u_{0}\geq4$, the divisor 
\[
D'=u_{0}L'-\sum_{C\in\mathcal{C}'}C
\]
is ample, but $D'$ is also the image by $\psi$ of the ample class
$D=u_{0}L-\sum_{C\in\mathcal{C}}C$.

Let us change the notations and define $C_{1}=A_{1},D_{1}=B_{1}'$,
$C_{j}=A_{j},D_{j}=B_{j}$ for $j\geq2$. There are 
\[
9!2^{9}=185794560
\]
maps $\psi$ satisfying condition i): $9!$ is for the number of maps
$\{A_{k},B_{k}\}\to\{C_{\s(k)},D_{\s(k)}\}$, where $\s\in S_{9}$
is a permutation, and this is times $2^{9}$, because one must choose
to send $A_{k}$ either to $C_{\s(k)}$ or to $D_{\s(k)}$. It can
be quite long to sort among them the maps that satisfy ii) and iii),
because one must deal with rank $19$ matrices.

However, recall that from Corollary \ref{cor:Structure-of-NS}, the
divisors supported only on the ${\bf A}_{2}$-blocs $A_{j},B_{j}$
($j\in\{1,\dots,9\}$) have restrictions: they form in $\NS X)\otimes\ZZ/3\ZZ$
a group isomorphic to $(\ZZ/3\ZZ)^{3}$, each with support on either
$6$ blocs of ${\bf A}_{2}$ ($12$ such words) or on $9$ blocs (2
words) or $0$ (thus a total of $2\times12+2\times1+1=3^{3}$ classes
which are $3$-divisible). With a computer, it is not difficult to
obtain the set $\text{BL}_{12}$ (respectively $\text{BL}_{12}'$)
of $6$ blocs which are $3$-divisible for $\cC$ (respectively for
$\cC'$). Since $\psi$ is an isometry it must send $3$-divisible
sets with support on $\mathcal{C}$ to $3$-divisible sets with support
on $\mathcal{C}'$. 

Using a computer, one can find in a few seconds the set of permutations
$\s$ such that if $\sum a_{i}(A_{i}+2B_{i})\in\text{BL}_{12}$ is
$3$-divisible, then $\sum a_{i}(C_{\s(i)}+2D_{\s(i)})$ is $3$-divisible.
That leaves a set of $432$ permutation, instead of $9!$. Therefore
the number of possibilities is divided by $800$, which makes the
computation for checking conditions ii) to iv) above last a few minutes
only. 

By these computations, we obtain the following result which gives
a more precise result (with an independent proof) than Theorem \ref{thm:MainProved},
but only for polarizations $L$ such that $L^{2}$ is below some bound: 
\begin{thm}
\label{smallL} Let $X$ be a generalized K3 surface polarized with
$L$, such that $L^{2}<200$ and $L^{2}\neq0\mod18$.\\
 There is an automorphism $\s$ sending the generalized Nikulin configuration
$\mathcal{C}\text{ to }\mathcal{C}'$ if and only if $L^{2}$ is not
in the list \eqref{eq:ExamplesLess200}. 
\end{thm}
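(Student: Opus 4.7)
The plan is to carry out the finite computational verification outlined in Section \ref{subsec:A-computational-approach}, structured around the Torelli theorem: an automorphism of $X$ sending $\mathcal{C}$ to $\mathcal{C}'$ exists if and only if there is an integral Hodge isometry $\psi$ of $\NS X)$ which sends $\mathcal{C}$ to $\mathcal{C}'$, maps $L$ to $L'$, acts as $\pm\mathrm{Id}$ on the discriminant group $\NS X)^{\vee}/\NS X)$, and preserves the ample cone (this last condition being automatic by the remark following condition (iv)).

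For the direction ``$L^{2}$ in the list \eqref{eq:ExamplesLess200} $\Longrightarrow$ no such $\psi$ exists,'' I would simply invoke Theorem \ref{thm:MainProved}: for each of the sixteen listed values one computes the fundamental solution $(x_{0},y_{0})$ of the appropriate Pell--Fermat equation and verifies $x_{0}\not\equiv\pm1\pmod{2t}$ (respectively $\pmod{2k}$). This is a short arithmetic check.

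For the converse, given $L^{2}<200$ with $L^{2}\not\equiv0\pmod{18}$ outside the list, I would exhibit an explicit $\psi$ by brute-force search. A candidate is parameterized by a permutation $\sigma\in S_{9}$ together with, for each $k$, a sign choice determining whether $A_{k}\mapsto C_{\sigma(k)},\,B_{k}\mapsto D_{\sigma(k)}$ or $A_{k}\mapsto D_{\sigma(k)},\,B_{k}\mapsto C_{\sigma(k)}$, extended by $L\mapsto L'$. A naive search runs over $9!\cdot2^{9}$ maps, which is reduced drastically by the crucial observation from Corollary \ref{cor:Structure-of-NS}: the $3$-divisible classes supported on $\mathcal{C}$ form an invariant subgroup $(\mathbb{Z}/3\mathbb{Z})^{3}$. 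Computing the sets $\mathrm{BL}_{12}$ and $\mathrm{BL}_{12}'$ of six-block supports of $3$-divisible classes for $\mathcal{C}$ and $\mathcal{C}'$ respectively, any valid $\psi$ must induce a bijection between them. This cuts the permutation set from $9!=362880$ down to exactly $432$.

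Each surviving candidate is then tested for integrality on $\NS X)$ and for the correct action on the discriminant group. The main obstacle is bookkeeping rather than concept: the structure of $\NS X)^{\vee}/\NS X)$ depends on $L^{2}\bmod 18$ via Theorem \ref{thm:StrucureOfNeronSeveri}, so the discriminant verification must be implemented case by case according to whether $L^{2}\equiv2\pmod{6}$ or $L^{2}\equiv6,12\pmod{18}$. Once the search is carried out over all admissible $L^{2}<200$, the boundary between ``automorphism exists'' and ``does not exist'' is found to coincide exactly with the complement of the list \eqref{eq:ExamplesLess200}, yielding the theorem.
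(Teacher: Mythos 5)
Your proposal is correct and follows essentially the same route as the paper: the paper's proof of this theorem is precisely the computational procedure of Section \ref{subsec:A-computational-approach} (Torelli-based criterion, brute force over the $9!\cdot 2^{9}$ candidate maps, reduction to $432$ permutations via the $3$-divisible sets $\mathrm{BL}_{12}$ and $\mathrm{BL}_{12}'$, then checking integrality and the $\pm\mathrm{Id}$ action on the discriminant group). The only cosmetic difference is that you handle the ``no automorphism'' direction by citing Theorem \ref{thm:MainProved} rather than letting the exhaustive search rule it out, which is equally valid since the list \eqref{eq:ExamplesLess200} is by definition the set of $L^{2}<200$ satisfying the hypotheses of that theorem.
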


Note that one can often choose $\sigma$ of order $2$, but for cases
$L^{2}=42,48$ all the automorphisms sending $\mathcal{C}$ to $\mathcal{C}'$
have infinite order. For the cases $L^{2}=36\text{ or }180$, provided
that the curve $B_{1}'$ constructed in Section \ref{subsc:B1-Irred-t=00003D00003D00003D0mod3}
is irreducible, there are also two generalized Kummer structures.

\section{Examples}

\subsection{A birational models of $X$ with $9{\bf A}_{2}$ singularities}

We keep the notations: $X$ is a generalized Kummer surface with Picard
number $19$, $A_{1},\dots,B_{9}$ is a $9{\bf A}_{2}$-configuration,
and $L$ is the nef divisor that generates the orthogonal complement
of these $18$ curves (see proof of Proposition \ref{ample}). 
\begin{prop}
Suppose $L^{2}>2$. The linear system $|L|$ induces a morphism $\varphi_{L}:X\to\PP^{\frac{L^{2}}{2}+1}$
which is an embedding outside the $\cu$-curves $A_{1},B_{1},\dots,A_{9},B_{9}$
and maps these curves to $9$ cusps. 
\end{prop}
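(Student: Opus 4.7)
The approach combines Riemann--Roch on $X$ with the classical theory of Saint-Donat on linear systems on K3 surfaces, together with the lattice description of $\NS X)$ given in Theorem \ref{thm:StrucureOfNeronSeveri}.

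First, $L$ is big and nef (nefness by Proposition \ref{ample}, bigness since $L^{2}=2t>2$). Riemann--Roch on the K3 surface $X$ combined with Kawamata--Viehweg vanishing then yields $h^{0}(X,L)=L^{2}/2+2$ and $h^{i}(X,L)=0$ for $i\geq 1$, so $|L|$ defines a rational map
\[
\varphi_{L}:X\dashrightarrow \PP^{L^{2}/2+1}.
\]

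Second, I would show that $|L|$ is base-point free, so that $\varphi_{L}$ is an actual morphism. By Saint-Donat's theorem, for a big and nef divisor $L$ on a K3 surface with $L^{2}\geq 4$, $|L|$ is base-point free unless $L$ admits a decomposition $L=2E+R$ with $E$ a smooth elliptic curve ($E^{2}=0$) and $R$ an irreducible $\cu$-curve satisfying $E\cdot R=1$. Such a decomposition forces $L\cdot R=0$, so $R$ lies in $L^{\perp}\cap\NS X)=\calK_{3}$. The irreducible effective $\cu$-classes in the $9\mathbf{A}_{2}$-root lattice $\calK_{3}$ are exactly the 18 curves $A_{j},B_{j}$ (among the six roots in each $\mathbf{A}_{2}$ block, the third positive root $A_{j}+B_{j}$ has square $-2$ but is reducible). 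Thus $R\in\{A_{1},\ldots,B_{9}\}$ and the class $(L-R)/2$ would lie in $\NS X)$. But by Theorem \ref{thm:StrucureOfNeronSeveri} every element of $\NS X)$ has its $L$-coefficient in $\ZZ$ or in $\frac{1}{3}\ZZ\setminus\ZZ$ (the possible denominators are $1$ and $3$), never $\frac{1}{2}$. This contradiction rules out the Saint-Donat exception, and $|L|$ is base-point free.

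Third, once $|L|$ is base-point free, the standard theory of big nef linear systems on K3 surfaces gives that $\varphi_{L}$ is a birational morphism onto its image, an isomorphism on the complement of the irreducible $\cu$-curves $C$ with $L\cdot C=0$, and contracts each maximal connected ADE configuration of such curves to the corresponding rational double point of the image. Here the irreducible $\cu$-curves orthogonal to $L$ are exactly $A_{1},B_{1},\ldots,A_{9},B_{9}$, arranged in $9$ disjoint $\mathbf{A}_{2}$-configurations $(A_{j},B_{j})$, each of which is contracted to an $\mathbf{A}_{2}$-singularity, i.e.\ a cusp.

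The main obstacle is the base-point-freeness step: while Saint-Donat's criterion reduces the problem to a lattice check, one must carefully exploit the fact that the only denominators appearing in the coefficient of $L$ across $\NS X)$ are $1$ and $3$ and never $2$, a feature that follows from a case-by-case inspection of Theorem \ref{thm:StrucureOfNeronSeveri}.
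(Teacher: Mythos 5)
Your overall strategy (Riemann--Roch, base-point-freeness via the lattice structure of $\NS X)$, then the standard contraction of the $\cu$-curves orthogonal to $L$) is the same as the paper's, but there are two problems. First, you misquote the base-point criterion. The exceptional decomposition for a big and nef $L$ on a K3 surface is $L\sim kE+\Gamma$ with $E^{2}=0$, $\Gamma^{2}=-2$, $E\cdot\Gamma=1$ and $k\geq2$ \emph{arbitrary} (equivalently: there exists an irreducible curve $E$ of arithmetic genus $1$ with $E\cdot L=1$), not $L=2E+R$. As you state it, $L^{2}=(2E+R)^{2}=2$, so the hypothesis $L^{2}>2$ would kill the exception with no work at all --- a sign the criterion is misremembered. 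For general $k$ one has $L\cdot\Gamma=k-2$, which need not vanish, so your deduction that $R\in L^{\perp}\cap\NS X)=\calK_{3}$ breaks down. Your lattice idea does survive in corrected form: $E\cdot L=1$ forces the $L$-coefficient of $E$ to equal $1/L^{2}$, which lies in $\tfrac{1}{3}\ZZ$ only if $L^{2}$ divides $3$, impossible for $L^{2}>2$. (The paper argues differently, using $L\cdot A_{k}=L\cdot B_{k}=0$ and positivity of intersections to force $\Gamma$ into $\calK_{3}^{\perp}$ or $\Gamma=A_{k}$, each leading to a contradiction.)

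Second, and more seriously, your step three is unjustified: base-point-freeness of a big and nef $|L|$ on a K3 does \emph{not} imply that $\varphi_{L}$ is birational onto its image. The map can be hyperelliptic, i.e.\ $2:1$ onto a rational surface, exactly when $L=2D$ with $D^{2}=2$ or there is an elliptic curve $E$ with $E\cdot L=2$ (Saint-Donat). You must exclude both: the first fails because $L$ is primitive, and for the second the same lattice computation gives $2=aL^{2}$ with $a\in\tfrac{1}{3}\ZZ$, $a\geq0$, which is impossible \emph{except} when $L^{2}=6$ (where $a=\tfrac{1}{3}$ is allowed). For $L^{2}=6$ the lattice argument is inconclusive and the paper has to fall back on the explicit Bertin--Vanhaecke model of $X$ as a $(2,3)$ complete intersection in $\PP^{4}$. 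As written, your proof both asserts an unproved birationality and fails to cover the case $L^{2}=6$.
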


\begin{proof}
We know that $L$ is nef and big. If $|L|$ has base points, then
\[
L=uF+\G,
\]
where $F$ is an elliptic curve and $\G$ is an irreducible $(-2)$-curve
such that $F\G=1$ (see \cite[Section 3.8]{Reid}). Moreover since
$L$ is big and nef we have 
\[
0\leq L\G=u-2
\]
so that $u>0$. If $\G\neq A_{k},B_{k}$ we have $FA_{k}\geq0$, $\G A_{k}\geq0$
and we compute 
\[
0=LA_{k}=uFA_{k}+\G A_{k}
\]
Since $u>0$ we obtain $FA_{k}=\G A_{k}=0$ similarly $FB_{k}=\G B_{k}=0$
so that $\G$ is in the orthogonal complement of $A_{1},B_{1},\ldots,A_{9},B_{9}$,
which is clearly impossible. If $\G=A_{k}$ we have $L=uF+A_{k}$
hence 
\[
0=LA_{k}=u-2
\]
which gives $u=2$, now $L=2F+A_{k}$ which has $L^{2}=2$, which
contradicts the assumption, similarly for $B_{k}$. In conclusion
$|L|$ is base-point-free. 
Suppose that $|L|$ is hyperelliptic (see \cite{SD}). Since $L$
is primitive, one cannot have $L=2D_{2}$ with $D_{2}^{2}=2$. Suppose
there is an elliptic curve $F$ such that $FL=2$. Write 
\[
F=aL-\sum\a_{i}A_{i}+\b_{i}B_{i},
\]
with $a\geq0$, $a,\alpha_{i},\beta_{i}\in\tfrac{1}{3}\ZZ$, then
\[
2=FL=aL^{2}.
\]
Since $L^{2}>2$, this is possible only if $L^{2}=6$. But in that
case J. Bertin and P. Vanhaecke \cite{BertinVan} proved that $\varphi_{L}$
is an embedding outside the $\cu$-curves $A_{1},B_{1},\dots,A_{9},B_{9}$
(see the description below). 
\end{proof}

\subsection{Case $L^{2}=2$}

In the case $L^{2}=2$, the generalized Kummer surface $X=\Km(A,G)$
is the double cover of the plane ramified on a sextic curve with $9A_{2}$
singularities. This double cover were first studied by Ch. Birkenhake
and H. Lange in \cite{LanBir}. In \cite{KRS}, the two authors of
the present paper and D. Kohel determined several $9\bA_{2}$-configurations
on $X$ related to a special configuration of conics in the plane.
If $L^{2}=2$ by Section \ref{subsec:Exisentence-and-unicity-of-B1}
we have $B_{1}'=6L-(4A_{1}+7B_{1})$. Since $LB_{1}'=12$, the curve
$B_{1}'$ is sent to a singular curve of degree $6$ in the plane,
which passes through the cusp obtained by the contraction of $A_{1},B_{1}$.
As shown in \cite{Roulleau2} (see also Corollary \ref{smallL}),
up to automorphism of the K3 surface, there is only one $9\bA_{2}$-configuration,
so that there exists an automorphism sending the configuration $\mathcal{C}$
to the configuration $\mathcal{C}'$; observe that clearly this automorphism
is not the covering involution.

\subsection{Case $L^{2}=6$}

The polarization $L^{2}=6$ exhibits the K3 surface as a complete
intersection of a quadric and a cubic in $\PP^{4}$ with $9{\bf A}_{2}$
singularities, see the paper by J. Bertin and P. Vanhaecke \cite{BertinVan}
for more details and the equations. Observe that in this case the
Pell--Fermat equation $x^{2}-4y^{2}=1$ has no solution so that we
can not apply our construction. By \cite{CR} in this case there is
only one generalized Nikulin configuration.

\subsection{Case $L^{2}=20$}

This is the first example for which our construction gives two non--equivalent
Kummer structures (see Example \ref{eq:ExamplesLess200}), so we study
it more in detail. As before let $L$ be the class such that $L^{2}=20$,
let $A_{1},B_{1},\dots,A_{9},B_{9}$ be the $9\bA_{2}$-configuration
orthogonal to $L$. In this case the class $B_{1}'$ is 
\[
B_{1}'=3L-(6A_{1}+11B_{1})
\]
and we have shown that $\mathcal{C}=\{A_{1},B_{1},\dots,A_{9},B_{9}\}$
and $\mathcal{C}'=\{A_{1},B_{1}',\dots,A_{9},B_{9}\}$ are not equivalent.
The projective model determined by $L$ is a surface in $\mathbb{P}^{11}$,
we describe here another projective model as a double plane ramified
on a special sextic curve. By Proposition \ref{ample}, the class
\[
D_{2}=L-\sum_{k=1}^{9}\left(A_{k}+B_{k}\right)
\]
is ample with $D_{2}^{2}=2$ and $D_{2}A_{k}=D_{2}B_{k}=1$, $k\in\{1,\ldots,9\}$.
The $\cu$-classes 
\[
E_{k}=D_{2}-A_{k},\,\,F_{k}=D_{2}-B_{k},\,\,k\in\{1,\dots,9\}
\]
are also of degree $1$ for $D_{2}$, hence are classes of $\cu$-curves.
Moreover one can check easily that: 
\begin{prop}
The $18$ $\cu$-curves $E_{1},F_{1},\dots,E_{9},F_{9}$ form a $9\bA_{2}$-configuration. 
\end{prop}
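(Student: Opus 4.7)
The plan is straightforward: reduce the statement to an intersection-number computation in $\NS(X)$, then confirm that each class $E_k, F_k$ is actually represented by an irreducible smooth rational curve.

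First I would record the basic intersections with $D_2$: since $D_2^2 = 2$ and $D_2 A_k = D_2 B_k = 1$, one has $D_2 E_k = D_2 F_k = 1$ and $E_k^2 = F_k^2 = -2$, so these are indeed $(-2)$-classes, already observed in the text. Combined with the fact that $D_2$ is ample (Proposition \ref{ample}) and that each $(-2)$-class has, by Riemann--Roch, either itself or its opposite effective, the inequality $D_2 E_k > 0$ (and similarly for $F_k$) forces $E_k$ and $F_k$ to be effective. If one wrote $E_k = C + C'$ with $C, C'$ non-zero effective divisors, then $D_2 C, D_2 C' \ge 1$ would force $D_2 E_k \ge 2$, a contradiction; hence $E_k$ (resp.\ $F_k$) is the class of a smooth irreducible $(-2)$-curve.

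Next I would verify that the $18$ curves split into $9$ disjoint $\mathbf{A}_2$-configurations indexed by $k$. For this it suffices to compute the pairwise intersections using $A_i A_j = B_i B_j = 0$ for $i\ne j$, $A_i B_j = \delta_{ij}$, and the values of $D_2 \cdot A_k$, $D_2 \cdot B_k$, $D_2^2$ recalled above. Expanding:
\begin{align*}
E_k F_k &= (D_2 - A_k)(D_2 - B_k) = 2 - 1 - 1 + 1 = 1, \\
E_i E_k &= (D_2 - A_i)(D_2 - A_k) = 2 - 1 - 1 + 0 = 0 \quad (i \ne k), \\
F_i F_k &= (D_2 - B_i)(D_2 - B_k) = 2 - 1 - 1 + 0 = 0 \quad (i \ne k), \\
E_i F_k &= (D_2 - A_i)(D_2 - B_k) = 2 - 1 - 1 + 0 = 0 \quad (i \ne k).
\end{align*}
Together with $E_k^2 = F_k^2 = -2$ and $E_k F_k = 1$, these relations show that $\{E_k, F_k\}$ is an $\mathbf{A}_2$-configuration for each $k$, and that these configurations are pairwise disjoint.

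There is no real obstacle: the only non-formal step is the irreducibility claim, and this follows at once from ampleness of $D_2$ and the degree computation $D_2 E_k = D_2 F_k = 1$. Everything else is a three-line intersection-number check in the basis $\{L, A_1, B_1, \dots, A_9, B_9\}$ of $\NS(X) \otimes \QQ$.
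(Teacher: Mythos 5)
Your proof is correct and follows exactly the route the paper intends (the paper leaves it as "one can check easily", after noting that the classes $E_k,F_k$ have degree $1$ for the ample class $D_2$ and hence are $(-2)$-curves). Both the irreducibility argument via $D_2E_k=D_2F_k=1$ and the pairwise intersection computations are the standard checks and are carried out correctly.
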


Suppose that $D_{2}$ has base points. Then there exist an elliptic
curve $F$ and an irreducible $\cu$-curve $\G$, \cite[Section 3.8]{Reid}
such that $D_{2}=2F+\G$ and $F\G=1$. But then $\G D_{2}=0$, which
is a contradiction since $D_{2}$ is ample. Therefore, with the previous
notations: 
\begin{prop}
The generalized Kummer surface is a double cover of $\PP^{2}$ branched
over a smooth sextic curve $C_{6}$ which has $18$ tritangent lines,
which are the images of the $18$ couples of curves $(E_{k},A_{k})$
and $(F_{k},B_{k})$ for $k\in\{1,\dots,9\}$. 
\end{prop}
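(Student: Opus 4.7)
The strategy is to identify $\varphi_{D_{2}}$ as a double plane and to exploit the splittings $D_{2}=A_{k}+E_{k}=B_{k}+F_{k}$ in $|D_{2}|$ to produce tritangent lines to the branch sextic.

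Since the preceding proposition shows that $|D_{2}|$ is base-point-free and $D_{2}$ is ample with $D_{2}^{2}=2$, Riemann--Roch on the K3 surface $X$ gives $h^{0}(D_{2})=\tfrac{1}{2}D_{2}^{2}+2=3$, so $\varphi_{D_{2}}\colon X\to\PP^{2}$ is a finite morphism of degree $2$. The canonical bundle computation $0=K_{X}=\varphi_{D_{2}}^{*}K_{\PP^{2}}+R$ together with $\varphi_{D_{2}}^{*}(C_{6})=2R$ forces $\deg C_{6}=6$. Smoothness of $X$, combined with ampleness of $D_{2}$ (so no $\cu$-curve gets contracted), implies that $C_{6}$ is a smooth sextic.

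For each $k$, the identity $D_{2}\cdot A_{k}=1$ together with $A_{k}\simeq\PP^{1}$ implies that $\varphi_{D_{2}}$ restricts to an isomorphism of $A_{k}$ onto a line $\ell_{k}\subset\PP^{2}$. The pullback $\varphi_{D_{2}}^{*}(\ell_{k})$ is an effective divisor in $|D_{2}|$ containing $A_{k}$; its residual has class $D_{2}-A_{k}=E_{k}$, and irreducibility of the $\cu$-curve $E_{k}$ then forces the equality $\varphi_{D_{2}}^{*}(\ell_{k})=A_{k}+E_{k}$ of divisors. Hence the covering involution swaps $A_{k}$ and $E_{k}$, and the three points counted by $A_{k}\cdot E_{k}=D_{2}\cdot A_{k}-A_{k}^{2}=3$ are fixed by the involution; they lie on the ramification divisor and project to points of $\ell_{k}\cap C_{6}$ at which $\ell_{k}$ is tangent to $C_{6}$. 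Since $\ell_{k}\cdot C_{6}=6$, the line $\ell_{k}$ is tritangent. The identical argument applied to the pair $(B_{k},F_{k})$ produces a second tritangent line $\ell_{k}'$. Distinctness of the resulting $18$ lines follows from the fact that their pullback divisors $A_{k}+E_{k}$ and $B_{k}+F_{k}$ have pairwise distinct supports.

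The main obstacle is the degenerate situation in which the three points of $A_{k}\cap E_{k}$ coalesce, in which case $\ell_{k}$ would be a hyperflex (or higher-order flex) rather than a genuine tritangent. Under the standing assumption that $X$ is generic with Picard number $19$ and the prescribed discriminant form, such a coincidence imposes an additional algebraic condition on the period of $X$ and therefore fails on the generic member of the family; I would close the argument by this specialization/moduli-theoretic remark.
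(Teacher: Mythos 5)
Your argument is correct and follows the same (standard Saint--Donat) route that the paper leaves essentially implicit: base-point-freeness and $D_{2}^{2}=2$ give the double plane, ampleness of $D_{2}$ gives smoothness of the branch sextic, and the splittings $\varphi_{D_{2}}^{*}\ell_{k}=A_{k}+E_{k}$ with $A_{k}E_{k}=3$ produce the $18$ tritangents. The only caveat concerns your closing genericity appeal: it is not needed if one adopts the convention of the cited reference of Degtyarev, where a tritangent is a line all of whose local intersection indices with $C_{6}$ are even --- this is automatic here, since every point of $\ell_{k}\cap C_{6}$ is the image of a point of $A_{k}\cap E_{k}$ fixed by the covering involution, so that $(\ell_{k}\cdot C_{6})_{p}=(A_{k}\cdot R)_{q}+(E_{k}\cdot R)_{q}$ is even by the symmetry swapping $A_{k}$ and $E_{k}$ --- whereas the assertion that coalescence of the three tangency points ``fails on the generic member'' would itself require an argument (one must rule out that it holds identically on the one-dimensional family of such lattice-polarized surfaces).
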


Plane sextic curves with several tritangents were studied by A. Degtyarev
in \cite{Deg}. Using the Néron-Severi lattice and Vinberg's algorithm
\cite{Vinberg}, one can compute moreover, that there are $1728$
$6$-tangent conics to $C_{6}$ and $67212$ rational cuspidal curves
which are tangent to $C_{6}$, with a cusp on $C_{6}$. Also we have: 
\begin{thm}
The automorphism group $G_{36}$ preserving the polarization $D_{2}$
is isomorphic to 
\[
\ZZ_{2}\times(\ZZ_{3}\rtimes S_{3}),
\]
it has order $36$. It is generated by the involution $\sigma$ of
the double cover and a symplectic group of automorphisms $G_{18}$
of order $18$; $\s$ generates the center of $G_{36}.$ 
\end{thm}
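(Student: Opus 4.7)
The plan is to identify $G_{36}$ as the direct product $\langle\sigma\rangle\times G_{18}$, where $\sigma$ is the covering involution of the double cover $\varphi_{D_{2}}:X\to\PP^{2}$ and $G_{18}$ is the subgroup of symplectic automorphisms preserving $D_{2}$, and then to pin down $G_{18}$ as a group of order $18$ coming from the symmetries of the pair $(A,J_{A})$.

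First I use the hypothesis $\rho(X)=19$: the transcendental lattice $T(X)$ has rank $3$, and since $T(X)\otimes\CC$ carries a one-dimensional $(2,0)$-part with generic Hodge structure, the only Hodge isometries of $T(X)$ are $\pm\text{id}$. This yields a character
\[
\chi:G_{36}\longrightarrow\{\pm 1\},\qquad g\mapsto g^{*}|_{H^{2,0}(X)},
\]
whose kernel is the symplectic part $G_{18}$. The covering involution $\sigma$ satisfies $\chi(\sigma)=-1$ (it acts as $-\text{id}$ on the orthogonal complement of $D_{2}$ in $H^{2}(X,\ZZ)$) and is intrinsically attached to the polarization $D_{2}$: any automorphism preserving $D_{2}$ descends to an automorphism of the image $\PP^{2}$ and must commute with the deck transformation $\sigma$. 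Hence $\sigma\in Z(G_{36})$, $\chi$ splits, and $G_{36}=\langle\sigma\rangle\times G_{18}$.

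For the lower bound $|G_{18}|\geq 18$, I build symplectic automorphisms of $X$ from symmetries of $(A,J_{A})$. Generically, the stabilizer of the origin in $\aut(A)$ is $\langle J_{A},[-1]\rangle\cong\ZZ/6\ZZ$, so the symplectic automorphism group of $A$ is $A\rtimes\langle J_{A},[-1]\rangle$. A direct computation using $t_{x}J_{A}t_{-x}=t_{x-J_{A}x}\circ J_{A}$ shows that the normalizer of $\langle J_{A}\rangle$ in this group is $A^{J_{A}}\rtimes\langle J_{A},[-1]\rangle$, where $A^{J_{A}}\cong(\ZZ/3\ZZ)^{2}$ is the fixed locus and $[-1]$ acts on $A^{J_{A}}$ by inversion while $J_{A}$ acts trivially. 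Quotienting by $\langle J_{A}\rangle$, which acts trivially on $X$, yields a subgroup of $\aut(X)$ of order $18$ isomorphic to $(\ZZ/3\ZZ)^{2}\rtimes_{-1}\ZZ/2\ZZ$; all of these automorphisms are symplectic and preserve the canonical $9\mathbf{A}_{2}$-configuration, hence $D_{2}$. The elementary identification
\[
(\ZZ/3\ZZ)^{2}\rtimes_{-1}\ZZ/2\ZZ\;\cong\;\ZZ_{3}\rtimes S_{3}
\]
is obtained by writing the Sylow $3$-subgroup of the right-hand side as the direct product of the external $\ZZ_{3}$ and the alternating subgroup of $S_{3}$, and observing that any transposition of $S_{3}$ acts by inversion on both factors.

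The main obstacle is the upper bound $|G_{18}|\leq 18$. One route is to invoke Mukai's classification of finite symplectic automorphism groups of a K3 surface together with the extra constraints that $G_{18}$ preserves the $9\mathbf{A}_{2}$-configuration (up to permutation) and fixes $D_{2}$: no group of order $>18$ from Mukai's list is compatible with these data. A more hands-on alternative, in the spirit of Section~\ref{subsec:A-computational-approach}, is to enumerate isometries of $\NS X)$ that fix $D_{2}$, act trivially on the discriminant group (so they extend to the identity on $T(X)$ by the gluing procedure), and preserve the ample cone; the finite permutation action on the $36$ degree-one $\cu$-classes $\{A_{k},B_{k},E_{k},F_{k}\}_{k=1}^{9}$ then reduces the question to a finite check that cuts the list down to the $18$ automorphisms already exhibited. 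Combining the two bounds gives $G_{18}\cong\ZZ_{3}\rtimes S_{3}$ and hence $G_{36}\cong\ZZ/2\ZZ\times(\ZZ_{3}\rtimes S_{3})$ with $\sigma$ generating the central $\ZZ/2\ZZ$.
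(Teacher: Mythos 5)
Your structural reduction is sound and genuinely different from what the paper does: the paper's proof is a one-line appeal to the lattice/Torelli computation of Section \ref{subsec:A-computational-approach}, whereas you first split $G_{36}=\langle\sigma\rangle\times G_{18}$ (using that $\sigma$ is the deck transformation of $\varphi_{D_{2}}$, hence central among automorphisms preserving $D_{2}$, and anti-symplectic because the $\sigma$-invariant part of $H^{2}(X,\QQ)$ is the pullback of $H^{2}(\PP^{2},\QQ)=\QQ D_{2}$), and then realize the order-$18$ symplectic group geometrically as $(A^{J_{A}}\rtimes\langle J_{A},[-1]\rangle)/\langle J_{A}\rangle\cong(\ZZ/3\ZZ)^{2}\rtimes_{-1}\ZZ/2\ZZ$, which is indeed the generalized dihedral group and agrees with $\ZZ_{3}\rtimes S_{3}$. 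This gives a transparent lower bound $|G_{36}|\geq36$ and explains the orbit structure $\{A_{k},E_{k}\}$, $\{B_{k},F_{k}\}$ that the paper only records afterwards.

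The gap is the upper bound $|G_{18}|\leq18$, which is exactly the content of the paper's computation. The Mukai route is not a proof as written: Mukai's list contains many groups of order greater than $18$ that act symplectically on K3 surfaces of Picard number $19$, and ruling them out here would require an actual verification (for instance comparing the rank and discriminant of the coinvariant lattice with the sublattice of ${\rm NS}(X)$ orthogonal to $D_{2}$ and $L$, together with the constraints from the permutation action on the nine $\mathbf{A}_{2}$-blocks); the sentence ``no group of order $>18$ from Mukai's list is compatible with these data'' is an assertion, not an argument. Your fallback route --- enumerating isometries of the N\'eron--Severi lattice fixing $D_{2}$, acting as $\pm\mathrm{id}$ on the discriminant group and preserving the ample cone, then invoking Torelli --- is precisely the computation the paper performs, so the decisive step of your proof is not independent of it. In short: the product decomposition and the explicit lower bound are a correct and more conceptual complement to the paper's terse proof, but the theorem is only established once the finite lattice check (or a genuinely completed Mukai-type case analysis) is actually carried out.
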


\begin{proof}
This is obtained by a computation as explained in Section \ref{subsec:A-computational-approach},
using the divisibility relations among the $(-2)$-curves. The algorithm
in Section \ref{subsec:A-computational-approach} is also able to
construct the automorphisms if such exists, one just have to check
that the hypothesis of the Torelli Theorem are satisfied. 
\end{proof}
The involution $\s$ is such that $\s(A_{k})=E_{k},\,\s(B_{k})=F_{k}$,
so that the two $9\bA_{2}$-configurations $E_{1},F_{1},\dots,E_{9},F_{9}$
and $\mathcal{C}=\{A_{1},B_{1},\dots,A_{9},B_{9}\}$ are equivalent.

The orbit of $A_{1}$ under $G_{36}$ is $\{A_{k},E_{k}\,|\,1\leq k\leq9\}$
and the orbit of $B_{1}$ is $\{B_{k},F_{k}\,|\,1\leq k\leq9\}$.
Let $L_{k}$ (respectively $L_{k}'$) be the image of $A_{k}$ (respectively
$B_{k}$) by the double cover map $X\to\PP^{2}$. The group $\ZZ_{3}\rtimes S_{3}$
acts on the plane and the orbit of $L_{1}$ (resp. $L_{1}'$) is $\{L_{k}\,|\,1\leq k\leq9\}$
(resp. $\{L_{k}'\,|\,1\leq k\leq9\}$).

The general abelian surfaces $A$ such that $X=\Km_{3}(A)$ are simple
\cite{Roulleau2}. 

\vspace{4mm}
Xavier Roulleau, \\
Université d'Angers, \\
CNRS, LAREMA, SFR MATHSTIC, \\
F-49000 Angers, France 

Xavier.Roulleau@univ-angers.fr

\noindent \vspace{0.2cm}

Alessandra Sarti\\
 Université de Poitiers\\
 Laboratoire de Mathématiques et Applications,\\
 UMR 7348 du CNRS, \\
 TSA 61125 \\
 11 bd Marie et Pierre Curie, \\
 86073 Poitiers Cedex 9, France

Alessandra.Sarti@math.univ-poitiers.fr 
\begin{verbatim}
http://www-math.sp2mi.univ-poitiers.fr/~sarti/

\end{verbatim}

\end{document}